\def\dispace{\setlength{\itemsep}{2pt}}
\def\pSkip{\vskip 1mm \noindent}
\def\lcmn{{\overline{n}}}
\def\add{\vee}
\def\Add{\bigvee}
\def\mlt{+}
\def\Mlt{\sum}
\def\Id{\operatorname{Id}}
\def\oo{\operatorname{o}}
\def\OO{\operatorname{O}}
\def\p{p}
\def\tN{\mathcal{N}}
\def\UT{\mathcal{U}}
\def\Un{\UT_n(\Trop)}
\def\Mat{\tM}
\def\MnT{\Mat_n(\Trop)}
\def\Mn{\Mat_n}
\def\Un{\tU_n}
\def\UnT{\Un(\Trop)}
\def\Sn{\operatorname{S}_n}
\def\hq{\hat q}
\def\hp{\hat p}
\def\hr{\hat r}
\newcommand{\hlt}[1]{\textbf{#1}}
\newcommand\vx[1]{#1} 
\def\cyc{\theta}
\def\Cycles{\Theta}
\def\SCycles{\Phi}
\newcommand\lno[2]{\#_{#1}({#2})}
\def\N{\mathbb N}
\def\nxn{n\times n}
\def\w{k}
\def\lcm{\operatorname{lcm}}
\def\per{{\operatorname{per}}}
\def\sep{\operatorname{sep}}
\def\N{\mathbb N}
\def\tS{\mathcal S}
\def\om{\omega}
\def\dl{\delta}
\def\ver{\mathcal V}
\def\arc{\mathcal E}
\def\varX{\mathcal A}
\def\a{a}
\def\wlen{\ell}
\newcommand\thmref[2]{\pSkip\textbf{Theorem #1. }\emph{#2}\vskip 1mm}
\newcommand\propref[2]{\pSkip\textbf{Proposition #1. }\emph{#2}\vskip 1mm}
\newcommand\corref[2]{\pSkip\textbf{Corollary #1. }\emph{#2}\vskip 1mm}
\def\tM{\mathcal M}
\def\tU{\mathcal U}
\newcommand\eval[2]{\left\llbracket  #1 , #2 \, \right  \rrbracket}
\newcommand\substit[2]{\left[ #1 , #2 \, \right]}
\newcommand\sid[2]{\langle #1 , #2 \rangle}
\newcommand\cset[1]{\langle #1 \rangle}
\def\set2{{\cset{2}}}
\def\l2{x^2y^2x}
\def\ll2{yx^2y^2x}
\newcommand{\len}[1]{\operatorname{\ell}(#1)}
\newcommand{\ds}[1]{\ {#1} \ }
\def\one{\mathbb{1}}
\def\zero{\mathbb{0}}
\newtheorem{theorem}{Theorem}[section]
\newtheorem{proposition}[theorem]{Proposition}
\newtheorem{definition}[theorem]{Definition}
\newtheorem{lemma}[theorem]{Lemma}
\newtheorem{notation}[theorem]{Notation}
\newtheorem{corollary}[theorem]{Corollary}
\newtheorem{example}[theorem]{Example}
\newtheorem{remark}[theorem]{Remark}
\newcommand {\junk}[1]{}
\newcommand{\etype}[1]{\renewcommand{\labelenumi}{(#1{enumi})}}
\def\eroman{\etype{\roman} \dispace }
\def\({\left(}
\def\){\right)}
\def\al{\alpha}
\def\gm{\gamma}
\def\e{\varepsilon}
\def\w{\om}
\def\Sem{{\mathbf S}}
\def\minf{-\infty}
\def\Real{\mathbb R}
\def\Trop{\mathbb T}
\def\rk{\operatorname{rk}}
\def\tr{\operatorname{tr}}
\def\mtr{\operatorname{tr}}
\def\trrk{\rk_{\tr}}
\def\frk{\rk_{\operatorname{fc}}}
\def\Auto{{\digr}}
\def\cyclty{\operatorname{cyc}}
\def\minf{-\infty}
\def\tlA{\widetilde A}
\def\subcrit{{\mathcal G}}
\def\Wi{(n-1)^2+1}
\def\Swalk{\mathcal{W}}
\newcommand{\walkslen}[3]{\Swalk^{\; #3}(#1\to #2)}
\newcommand{\walkslennode}[4]{\Swalk^{\; #3}(#1\xrightarrow{#4} #2)}
\newcommand{\walksnode}[3]{\Swalk^*(#1\xrightarrow{#3} #2)}
\newcommand{\kln}[1]{\left(#1\right)^\star}
\title[Semigroup identities of tropical matrices through matrix  ranks]
 {Semigroup identities of tropical matrices \\[2mm]   through matrix  ranks}
\author{Zur Izhakian}
\address{  Institute  of Mathematics,
 University of Aberdeen, AB24 3UE,
Aberdeen,  UK.
    }
    \email{zzur@abdn.ac.uk}
\author{Glenn Merlet}
 \address{Aix Marseille Univ, CNRS, Centrale Marseille, I2M, Marseille, France}
 \email{glenn.merlet@univ-amu.fr}
\subjclass[2010]{Primary:  20M05, 20M07, 20M30, 47D03; Secondary: 16R10,  68Q70,
14T05. }
\date{\today}
\keywords{Tropical (max-plus) matrices, idempotent
semirings, semigroup identities, ranks of matrices, semigroup varieties, semigroup
representations, word separation.}
\def\Id{\operatorname{Id}}
\def\crt{{\operatorname{cr}}}
\def\crit{{\operatorname G}^{\crt}}
\def\digr{{\operatorname G}}
\def\thrs{{\operatorname T}_{\crt}}
\def\tthrs{{\widetilde{\operatorname T}}_{\crt}}
\def\digrG{{G}}
\def\subcrit{{ H}}
\def\mN{\mathcal{N}}
\def\sr{\lambda}
\begin{document}

\begin{abstract}
We  prove the conjecture that, for any $n$, the monoid of all $n \times n$ tropical matrices satisfies nontrivial semigroup identities.
To this end, we prove that the factor rank of a large enough power of a tropical matrix does not exceed the tropical rank of the original matrix.
\end{abstract}

\maketitle



\section*{Introduction}
\numberwithin{equation}{section}

Tropical matrices are matrices  over the max-plus semiring \cite{pin98}, that is $\Trop\ := \Real\cup\{-\infty\}$ equipped with the operations
of maximum as addition and summation as  multiplication:
$$a \add  b :=\max\{a,b\},\qquad a \mlt b := \operatorname{sum}\{a,b \}.
$$
This semiring is additively idempotent, i.e., $a \add  a = a$ for every $a\in \Trop$, in which
$\zero := -\infty$ is the zero element and $\one := 0$ is the
multiplicative  identity.
More generally, one may consider $\Trop$ as an ordered semiring whose addition is determined as maximum, e.g., a semiring obtained from an ordered monoid $(\tS, \cdot \;)$   by setting the addition to be maximum and $\cdot$ as multiplication.  $\MnT$ denotes the monoid of all $n \times n $ square  matrices  with entries in~$\Trop$, and induced multiplication.
These matrices correspond uniquely to weighted digraphs (see ~\cite{Butkovic,MPatW} for recent expositions), which play a central role  in
 algebraic methods, applications to combinatorics,
semigroup representations, automata theory, and many other methodologies.

Any finitely generated semigroup of tropical matrices has polynomial growth \cite{Ales, Simon}; thus
the free semigroup on two generators is not isomorphic
to a tropical matrix sub-semigroup. Growth rate  of  groups is an important subject of study in  combinatorial and geometric group theory, delivered to semigroup theory as well, involving semigroup identities \cite{SV}.  While Gromov’s theory \cite{Grom} implies that every finitely generated
 group having polynomial growth satisfies a nontrivial semigroup identity (since it is virtually nilpotent),
 Shneerson  has given examples which show that this does not hold for semigroups \cite{Shn}.

Tropical matrices enable natural linear representations of semigroups;  therefore,
the question whether tropical matrices satisfy nontrivial semigroup identities arises immediately \cite{IzMr}.
If they do satisfy identities, then any faithfully represented semigroup inherits these identities, and complicated computations are saved \cite{plc}.
As well, these identities define  varieties of tropically represented semigroups \cite[~Ch.~ VII]{pinSV},
where matrix view may provide a classification (or bases) for these varieties.
 Birkhoff HSP Theorem states that varieties are the only classes of semigroup stable under homomorphisms, submonoids, and products.
In addition, by the one-to-one correspondence,  matrix identities are carried over to labeled weighted digraphs,
with multiplication replaced by walk composition,
and are interpreted as the impossibility of word separation in automata theory \cite{wSEP}. (See Section \ref{ssec:separation} for details.)

Semigroup identities have been found for certain submonoids of tropical matrices, including triangular matrices,
and for arbitrary $2 \times 2 $ and $3 \times 3$ matrices \cite{trID,mxID,IzMr,Shitov,Okninski}.
In this paper we prove the existence of identities for all $n\times n$ tropical matrices, for any  $n$,  to wit:
\thmref{\ref{thm:IdExistence}}{The monoid $\MnT$ satisfies a nontrivial semigroup identity for every $n \in \N$.
The length of this identity grows with~$n$ as~$e^{C n^2+\oo(n^2)}$, for some~$C\le 1/2+\ln (2)$.} 
\noindent
This theorem further supports the insight that, in many senses, the behavior of tropical matrices is similar to that of matrices over a field \cite{AGGu,Iz,IJK,IR1,IR2,IR3,IR4}, and has immediate consequences  in semigroup representations.
\corref{\ref{cor:rep.id}}{Any semigroup which is faithfully represented  by $\MnT$  satisfies a nontrivial identity.}

Our semigroup identities arise from an idea of Y.~Shitov~\cite{Shitov}, resulting in Lemma \ref{l:Shitov}, which paves the way to constructing identities for matrices by induction  on their size.
The further step towards this aim is detecting new relations for those matrices which cannot be factorized to a product of matrices of smaller size,
 said to have  factor rank ~$n$.
Unfortunately, Shitov was only able to deal with matrices having maximal determinantal rank,
 and thus to conclude the existence of identities only for $3 \times 3$ matrices.
(See Definition~\ref{def:matOper} for  various notions of rank
and~\cite{ABG} for an extensive  survey.)

To prove Theorem~\ref{thm:IdExistence}, we rely on  tropical rank and give a generalization of the first author's result~\cite{mxID} to obtain identities for matrices of maximal rank (Theorem \ref{thm:FullRank}), based on
identities of  triangular matrices (\cite[Theorem 4.10]{trID} or~\cite{Okninski}).
Since tropical rank is the smallest among other notions of ranks~ \cite{ABG}, especially smaller than determinantal rank,
this is not enough to construct identities for~$\MnT$, and additional ingredient is needed.
Specifying a new  relationship between tropical and factor rank is then a crucial obstacle, confronted in this paper.
We introduce two results of similar flavor.

\propref{\ref{pr:TropToFactor}}{Let $A\in\MnT$ and $\lcmn=\lcm(1,\dots, n)$. If $\trrk(A^\lcmn)<n$,  then $\frk(A^{t\lcmn})<n$ for any  $t\ge 3n-2$.}

\thmref{\ref{thm:frk}}{$\frk(A^{t})\le \trrk(A)$ for any~$A\in\MnT$ and  $t\ge (n-1)^2+1$.}
\noindent
The proof of the latter  is based on the  so-called weak CSR expansion -- a method developed  by  T.~Nowak, S.~Sergeev,
and the second author in~\cite{wCSR}.
The former  is proven in the same spirit, but the simplification derived from the power~$\lcmn$ allows for a self-contained exposition of
graph theoretic arguments.

These results are interesting for their own sake, as they introduce new relationships between different notions of rank, concerning also
their tendency to unite for large powers.
Indeed,  in their earlier paper~\cite{ultRank} the authors have shown that, taking powers of a matrix, at the limit all notions of rank coincide. This limit is reached for irreducible matrices, but the exponent can be arbitrary large.

The paper is organized as follows. Section~\ref{sec:2} recalls the relevant setup and results to be used in the paper.
Section~\ref{sec:3}~introduces the relationships between the factor rank of a matrix power and its original tropical rank.
Section~\ref{sec:4} applies these relationships to prove the existence of semigroup identifies for~ $\MnT$.

\section{Preliminaries}\label{sec:2} As the paper combines several areas of study, 
 we provide   the relevant background.
 
\subsection{Semigroup identities}

Given an \hlt{alphabet} $\varX$, i.e., a finite set   of \hlt{letters},
the free monoid of finite sequences generated by $\varX$ is denoted by $\varX^*$.
The elements of $\varX^*$ are termed \hlt{words}, its identity element is the empty word, denoted by $e$. The \hlt{length} of a word $w$, denoted by $\wlen(w)$, is the number of its letters.
 We write $\lno{\a}{w}$ for the number of occurrences of a letter $\a$ in $w$.
  Both $\wlen(w)$ and $\lno{\a}{w}$  are nonnegative integers.
The free semigroup $\varX^+$ is obtained from $\varX^*$ by excluding the empty word.

A (nontrivial) \hlt{semigroup identity} is a formal equality $u = v$, written as pair $\sid{u}{v}$, where
$u$ and~$v$ are two  different words in  $\varX^+$, cf. \cite{SV}. For a monoid
identity one  allows $u$ and $v$ to be the empty word as well, i.e.,  $u,v \in \varX^*$.
The \hlt{length} of $\sid{u}{v}$ is defined to be  $\max\{ \wlen(u), \wlen(v)\}$.
An identity  $\sid{u}{v}$ is said to be an \hlt{n-letter identity}, if $u$ and~$v$ involve at most~$n$ different letters from~$\varX$.

 A semigroup $\tS := (\tS, \cdot \;)$ \hlt{satisfies a
semigroup identity}~$\sid{u}{v}$,   if
\begin{equation}\label{eq:s.id}
\text{ $ \phi(u)= \phi(v)$ \ for every semigroup homomorphism $\phi
:\varX^+ \longrightarrow \tS$.}
\end{equation}
The set of all semigroup identities satisfied by $\tS$ is denoted by $\Id(\tS)$.
Note that even if $\tS$ is a monoid or a group, $u,v$ are still taken to be elements of the free semigroup $\varX^+$.
With this setting, $\sid{ab}{e}$ is not a legal semigroup identity, but it is a monoid identity.

\begin{theorem}[{\cite[{Theorem 3.10}]{trID}}]\label{thm:2id} A semigroup that satisfies an
 $n$-letter identity, $n \geq 2$,  also satisfies  a
$2$-letter identity of the same length.
\end{theorem}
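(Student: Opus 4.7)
The plan is to produce the desired identity as a substitution instance of $\sid{u}{v}$. Say $u, v$ are words in at most $n$ letters $x_1, \dots, x_n$. For any map $\sigma: \{x_1, \dots, x_n\} \to \{x, y\}$, extended multiplicatively to $\sigma: \{x_1, \dots, x_n\}^+ \to \{x, y\}^+$, the pair $\sid{\sigma(u)}{\sigma(v)}$ is automatically a $2$-letter identity of $\tS$: for any homomorphism $\phi: \{x, y\}^+ \to \tS$, the composition $\phi \circ \sigma$ is a homomorphism from $\{x_1, \dots, x_n\}^+$ into $\tS$, so by (\ref{eq:s.id}) applied to $\sid{u}{v}$ one has $\phi(\sigma(u)) = \phi(\sigma(v))$. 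Moreover, since $\sigma$ sends each letter to a single letter, $\wlen(\sigma(u)) = \wlen(u)$ and $\wlen(\sigma(v)) = \wlen(v)$, so the length of $\sid{\sigma(u)}{\sigma(v)}$ equals that of $\sid{u}{v}$.

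What remains is to arrange $\sigma(u) \ne \sigma(v)$ as words, so that the resulting identity is nontrivial. I split into two cases. If $\wlen(u) \ne \wlen(v)$, the constant map $\sigma(x_i) := x$ for every $i$ already works, since $\sigma(u)$ and $\sigma(v)$ become powers of $x$ of distinct lengths. Otherwise $\wlen(u) = \wlen(v)$, and being distinct words of equal length, $u$ and $v$ must disagree at some position $p$: there exist $j \ne k$ with $u_p = x_j$ and $v_p = x_k$. Setting $\sigma(x_j) := x$, $\sigma(x_k) := y$, and extending $\sigma$ arbitrarily on the remaining letters forces $\sigma(u)$ and $\sigma(v)$ to differ at position $p$.

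In either case, $\sid{\sigma(u)}{\sigma(v)}$ is a nontrivial $2$-letter identity of $\tS$ of the same length as $\sid{u}{v}$. The main, rather mild, obstacle is just the positional argument in the equal-length case; it is the only step that uses the combinatorial structure of the free semigroup, and no further ingredient is required.
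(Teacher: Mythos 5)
Your proof is correct. The paper cites this statement from \cite[Theorem 3.10]{trID} without reproducing a proof, so there is nothing to compare line by line; but the argument you give---substituting each letter by a single letter from $\{x,y\}$, using the constant substitution when $\wlen(u)\ne\wlen(v)$ and otherwise separating the two letters at a position of disagreement---is the standard one, it preserves length exactly, and the two cases together cover all possibilities since distinct words of equal length must differ at some position.
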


In this view, regarding existence of semigroup identities, one may restrict to  a 2-letter alphabet.
Therefore, in the sequel, \hlt{we always assume that $\varX = \{ a,b \}$}.

\begin{notation} Given a word $w \in  \varX^+$ and elements $s',s'' \in \tS$,  we write
  $w\eval{s'}{s''}$ for the evaluation of $w$ in $\tS$, obtained by substituting
   $a \mapsto s'$, $b \mapsto s''$. Similarly, we write
  $\sid{u}{v}\eval{s'}{s''}$ for the pair of evaluations  $u\eval{s'}{s''}$ and $v\eval{s'}{s''}$  in $\tS$ of the words $u$ and $v$.

 In the certain case that $\tS = \varX^+$, to indicate that for $u,v \in \varX^+$ the evaluation  $w\eval{u}{v}$ is again a word in~ $\varX^+$, we use the particular notation $w\substit{u}{v}$. Similarly, we write
  $\sid{u}{v}\substit{u}{v}$ for $\sid{u}{v}\eval{u}{v}$.

\end{notation}

With these notations, condition \eqref{eq:s.id} reads as
\begin{equation*}\label{eq:s.id.2}
\text{ $\sid{u}{v} \in \Id(\tS)$ \ iff \ $u\eval{s'}{s''} = v\eval{s'}{s''}$ for every $s',s'' \in \tS$.}
\end{equation*}
Note also that, if $\sid{u}{v} \in \Id(\tS)$, then $\sid{u}{v}\substit{w_1}{w_2} \in \Id(\tS)$ for any $w_1,w_2 \in \varX^+$.
\subsection{Tropical matrices}\label{sec:mathGr}

Tropical matrices are matrices with entries in $\Trop := \Real \cup \{ -\infty\}$, whose multiplication is
induced from the semiring operations of $\Trop$ as in the familiar matrix construction. The set of all $\nxn$ tropical matrices form the multiplicative monoid  $\Mn := \MnT$. The {identity}  of~
 $\Mn$,  denoted  by $I$,  is the matrix
with $\one := 0$ on the main diagonal and whose off-diagonal
entries are all $\zero := \minf$.
Formally,  for any nonzero matrix $A \in \Mn$ we set $A^{0} :=
I$. A  matrix $A \in  \Mn$ with entries $A_{i,j}$ is written
as $A = (A_{i,j})$, where  $i,j = 1,\dots,n$. We denote by $\Un:= \UnT$ the submonoid of all (upper) tropical triangular  matrices in $\Mn$. We write $\Mat_{m,n} := \Mat_{m,n}(\Trop)$ for the set of all $m \times n$ tropical matrices.
A \hlt{permutation matrix} is an $\nxn$ matrix $P_\pi = (P_{i,j})$, with $\pi$ a permutation over $\{1,\dots,n\}$, such that $P_{i, \pi(i)} = \one$ for each $i = 1, \dots, n$ and $P_{i,j} = \zero $ for all $ j \neq \pi(i)$.

\begin{definition}\label{def:matOper} Given  a tropical matrix $A\in \Mn$.
\begin{enumerate} \eroman \dispace
 \item The \hlt{permanent} of  $A$ is
defined as:
  \begin{equation*}\label{eq:tropicalDet}
 \per(A)= \Add_{\pi \in \Sn}    \Mlt_i A_{i,\pi(i)},
\end{equation*}
where $\Sn$ denotes the set of all the permutations over $\{1,\dots,n\}$.
The \hlt{weight} of a permutation $\pi \in \Sn$ is~$\w(\pi)=\Mlt_i A_{i,\pi(i)}$, so that~$\per(A) = \Add_{\pi \in \Sn}  \w(\pi)$. 

 \item $A $ is called \hlt{nonsingular},
 if there exists a unique permutation $\tau_A \in \Sn $ that reaches $\per(A)$; that is,
$ \per(A)  =  \w(\tau_A) = \Mlt_i A_{i,\tau_A(i)} \;.$
Otherwise, $A$ is said to be \hlt{singular}.

\item The \hlt{tropical rank}  of $A$, denoted $\trrk(A)$,  is the largest $k$ for which $A$ has a $k \times k$ nonsingular submatrix.
Equivalently,  $\trrk(A)$ is the  maximal number of independent columns (or rows) of~$A$ for an adequate notion of independence \cite{IR1}.

\item The \hlt{factor rank} (also called Schein/Barvinok rank)  of $A$, denoted $\frk(A)$, is the smallest  $k$
for which $A$ can be written as $A=BC$ with~$B\in \Mat_{n,k}$ and~$C\in \Mat_{k,n}$.
Equivalently, $\frk(A)$ is the minimal number of vectors whose tropical span contains the span of the columns (or rows) of~$A$,
or the minimal number of rank-one  matrices~$A_i$ needed to write $A$ additively as~$A=\Add_i A_i$,~cf.
~\cite{ABG}.

\item The \hlt{trace} $\mtr(A) = \sum_{i} A_{i,i}$ is  the usual trace taken with respect to summation,  although it corresponds to the tropical product of diagonal entries in~$\Trop$.

\end{enumerate}
\end{definition}
\noindent
By definition of~$\trrk$, a matrix $A\in \Mn$ is nonsingular iff $\trrk(A) = n$.
 From the last characterization of ~$\frk$ it readily follows that this rank is subadditive:
\begin{equation}\label{eq:subadd}
 \frk(A\add B)\le  \frk(A)+\frk(B).
\end{equation}
As known, the above notions of rank do not coincide~\cite[~\S8]{AGGu}. Nevertheless, the inequality
\begin{equation}\label{rank.relation1}
 \trrk(A) \leq \frk(A)
\end{equation}
holds for every  $ A \in \Mn$  \cite[Theorem 1.4]{DSSt}.
\pSkip

It is easily seen that
 $\per(A) \geq \mtr(A)$ and $\mtr(AB) \geq \mtr(A) + \mtr(B)$
 for any $A,B \in \Mn$. Furthermore, for products of matrices,  we have the following.

\begin{theorem}[{\cite[Theorem 2.6]{Iz}, \cite[Theorem 3.5]{IR2},  \cite[Proposition 3.4]{merl10}}]\label{pr:perAB}
Any~$A,B \in \Mn$ satisfy
 $$\per(AB) \geq  \per(A) + \per(B).$$
 If $AB$  is nonsingular, then $A$ and~$B$ are nonsingular,
$ \per(AB) = \per(A) + \per(B),$ and  $\tau_{AB}= \tau_B \circ \tau_A. $
\end{theorem}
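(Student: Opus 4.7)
The plan is to prove both statements in turn, with the inequality first and then the equality/nonsingularity claim.

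For the inequality $\per(AB) \geq \per(A) + \per(B)$, I would argue by direct construction. Pick any two permutations $\tau, \pi \in \Sn$ and set $\sigma := \pi \circ \tau \in \Sn$. By the definition of tropical matrix product,
\[
(AB)_{i,\sigma(i)} \;=\; \max_k \bigl(A_{i,k} + B_{k,\sigma(i)}\bigr) \;\geq\; A_{i,\tau(i)} + B_{\tau(i),\pi(\tau(i))}.
\]
Summing over $i$ and re-indexing the $B$-term through the bijection $j = \tau(i)$ gives
\[
\per(AB) \;\geq\; \sum_i (AB)_{i,\sigma(i)} \;\geq\; \sum_i A_{i,\tau(i)} + \sum_j B_{j,\pi(j)}.
\]
Maximizing the right-hand side over $\tau,\pi \in \Sn$ yields $\per(AB) \geq \per(A) + \per(B)$.

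For the nonsingular case, assume $AB$ is nonsingular with unique optimizer $\sigma^* := \tau_{AB}$. For each $i$, pick any $k_i^*$ attaining the maximum in $(AB)_{i,\sigma^*(i)} = \max_k(A_{i,k} + B_{k,\sigma^*(i)})$, so that $\per(AB) = \sum_i\bigl(A_{i,k_i^*}+B_{k_i^*,\sigma^*(i)}\bigr)$. The crucial and most delicate step — and what I expect to be the main obstacle — is to show that the map $\tau : i \mapsto k_i^*$ is itself a permutation. I would argue by contradiction: suppose $k_{i_1}^* = k_{i_2}^* = j$ for some $i_1 \neq i_2$. Form $\sigma'$ by swapping the values $\sigma^*(i_1)$ and $\sigma^*(i_2)$; since $\sigma^* \in \Sn$, $\sigma' \neq \sigma^*$. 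Keeping the $k_i^*$ fixed, the $B$-contributions at indices $i_1,i_2$ sum to $B_{j,\sigma^*(i_1)} + B_{j,\sigma^*(i_2)}$ both before and after the swap, because in both cases the same column $j$ of $B$ is used at both indices. Hence
\[
\sum_i\bigl(A_{i,k_i^*}+B_{k_i^*,\sigma'(i)}\bigr) \;=\; \per(AB),
\]
which forces $\sum_i (AB)_{i,\sigma'(i)} \geq \per(AB)$, so $\sigma'$ is a second optimizer for $\per(AB)$, contradicting the nonsingularity of $AB$.

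Once $\tau$ is known to be a permutation, the identity
\[
\per(AB) \;=\; \sum_i A_{i,\tau(i)} + \sum_j B_{j,(\sigma^*\circ\tau^{-1})(j)} \;\leq\; \per(A) + \per(B),
\]
combined with the reverse inequality from the first paragraph, forces all inequalities to equalities: $\per(AB) = \per(A) + \per(B)$, with $\tau$ a maximizer of $A$'s permanent and $\sigma^*\circ\tau^{-1}$ a maximizer of $B$'s. To promote ``maximizer'' to ``unique maximizer'', I would suppose for contradiction that $A$ admits a second optimal $\tau' \neq \tau$; applying the construction of the first paragraph to the pair $(\tau', \sigma^*\circ\tau^{-1})$ then produces the permutation $(\sigma^*\circ\tau^{-1})\circ\tau' \neq \sigma^*$ that also attains $\per(AB)$, again contradicting uniqueness of $\sigma^*$; symmetrically for $B$. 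This forces $\tau = \tau_A$ and $\sigma^*\circ\tau^{-1} = \tau_B$, whence $\tau_{AB} = \sigma^* = \tau_B \circ \tau_A$, completing the proof.
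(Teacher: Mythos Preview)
The paper does not give its own proof of this theorem: it is quoted as a known result from \cite{Iz}, \cite{IR2}, and \cite{merl10}, so there is nothing to compare your argument against in the present paper. Your proof is correct. The inequality follows exactly as you describe, and in the nonsingular case your swap argument cleanly forces the selector $i\mapsto k_i^*$ to be a permutation; the remaining uniqueness deductions for $\tau_A$ and $\tau_B$ are sound since any competing maximizer $\tau'$ for $A$ (resp.\ $\pi'$ for $B$) would yield, via the composition $(\sigma^*\circ\tau^{-1})\circ\tau'\neq\sigma^*$ (resp.\ $\pi'\circ\tau\neq\sigma^*$), a second permutation attaining $\per(AB)$. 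One minor point worth making explicit: nonsingularity of $AB$ forces $\per(AB)>-\infty$ (otherwise every permutation would attain $\per(AB)$), hence each $(AB)_{i,\sigma^*(i)}>-\infty$ and the optimal index $k_i^*$ indeed exists.
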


\subsection{Digraphs and automata}

Any matrix $(A_{i,j}) \in \Mn$ is uniquely associated with the \hlt{weighted digraph}
$\digr(A) := (\ver, \arc)$ over the set of node
$\ver :=\{ \vx{1}, \dots, \vx{n}\}$ with   a directed arc
$\e_{i,j} := (\vx{i}, \vx{j}) \in \arc$ of \hlt{weight}~ $A_{i,j}$ from $\vx{i}$ to $\vx{j}$  for every
$A_{i,j} \ne \zero$.  With this one-to-one correspondence, we say that $\digr(A)$ is the graph of the matrix $A$, and conversely that $B$ is the matrix of the weighted digraph $\digr'$, if $\digr' = \digr(B)$.

A \hlt{walk} $\gm$ on~$\digr(A)$ is a sequence of arcs
$\e_{i_1, j_1}, \dots, \e_{i_m, j_m} $, with $j_{k} = i_{k+1}$ for every
$k = 1,\dots, m-1$. We write $\gm := \gm_{i,j}$ to indicate that
$\gm$ is a walk from $\vx{i} = \vx{i_1}$ to $\vx{j}=\vx{j_m}$.
The \hlt{length}  of a walk  $\gm$, denoted by $\len{\gm}$,  is the number of
its arcs. 
Formally, we may consider also walks of length $0$, one on each node.
The \hlt{weight} of
$\gm$, denoted by $\w(\gm)$,  is the sum of weights of its arcs, counting repeated arcs.

We write $ \gm_{i,j}\circ \gm_{j,h}$ for the composition  of the walk $\gm_{i,j}$ from~$i$ to~$j$ with the walk  $\gm_{j,h}$ from~$j$ to~$h$. Similarly, $(\rho)^k$ denotes for the composition $\rho \circ \dots \circ \rho$ of a loop $\rho$ repeated $k$ times.
A walk~$\gm=\e_{i_1, i_2}, \dots, \e_{i_m, j_m}$
may also be viewed as the sequence of nodes~$(i_1,i_2, \dots, i_m,j_m)$. For convenience, we use this point of view as well,
depending of context, and write $\vx{i}\in\gm$ to indicate that the node~$i$ appears in~$\gm$.

A walk~$\gm$ is \hlt{simple} (or elementary) if it has no repeated nodes, i.e., a node appears in~$\gm$ at most once, except possibly as first and last  node.
A (simple) walk that starts and ends at the same node is called a
\hlt{(simple) cycle}. An arc $\rho_i := \e_{i,i}$ is called a \hlt{loop}.
A \hlt{$1$-cyclic walk} is a walk which contains simple cycles of length at most $1$.

A digraph $\digrG$ is called \hlt{strongly connected}, if there is a walk from~$i$ to~$j$ for any nodes~$i,j$.
Maximal strongly connected subgraphs of~$\digrG$ are called \hlt{strongly connected components} (\hlt{s.c.c.'s}).
When there are  no arcs between different s.c.c.'s, $\digrG$ is said to be \hlt{completely reducible}.

The \hlt{cyclicity}~$\cyclty(\digrG)$ of a strongly connected digraph~$\digrG$ is the greatest common divisor of the lengths of its cycles.
If~$\digrG$ is not strongly connected, then its cyclicity~$\cyclty(\digrG)$ is
the least common multiple of the cyclicities of its s.c.c.'s.
It is well-known that the lengths of all walks on~$\digrG$ which start at a same node~$i$ and end at a same  node~$j$ are congruent modulo~$\cyclty(\digrG)$.

 \begin{remark}\label{per.digr}
 A permutation $\pi \in \Sn$ uniquely corresponds to a disjoint union $\Cycles$ of simple cycles $\cyc_1, \dots, \cyc_m$ that cover all the nodes of $\digr(A)$.
 We say that $\cyc_t$ is a cycle of $\pi$, and write $\w(\Cycles) = \sum_t \w(\cyc_t)$, so that $\w(\Cycles) = \w(\pi)$.
 The permanent of $A$ is the highest weight  $\w(\Cycles)$ over all such   $\Cycles$. Accordingly, a matrix $A \in\Mn$ is nonsingular, if $\digr(A)$ has a unique covering $\Cycles$ of highest weight  by simple cycles.
 \end{remark}

The \hlt{spectral radius} of a matrix $(A_{i,j})\in \Mn$ is the value
\begin{equation}\label{eq:sr}
\sr(A) = \bigvee_{ j\leq n} \bigvee_{i_1,\dots, i_j } \frac{ A_{i_1 i_2} +
  A_{i_2i_3} +\cdots + A_{i_j i_1}}{j},
\end{equation}
that is,  the maximal mean weight of  (simple) cycles in $\digr(A)$.
A simple cycle of $\digr(A)$ is called \hlt{critical}, if its mean weight equals
$\sr(A)$. A node of $\digr(A)$ is said to be a \hlt{critical node}, if it belongs to some critical cycle.
The \hlt{critical graph} of $A$, denoted by $\crit(A)$, is the union of
all the critical cycles of $\digr(A)$, over the node set $\ver$.
If $\digr(A)$ is acyclic, then $\sr(A)=\zero$ and $\crit(A)$ has no arcs. We may also view $\crit(A)$ as a subgraph of $\digr(A)$ over a subset of nodes in $\ver$.

\pSkip

The \textbf{kleene star} of $A$ is the matrix
$$ A^\star =    \Add_{k \in \N}  A^k.$$
(Some entries might be $+\infty$, unless $A$ is normalized by $\sr(A)\le\one$.)

\begin{remark}\label{rk:AkWalk}
Taking a power $A^t$ of a matrix $A \in \Mn$, it is easily verified that the entry
$(A^t)_{i,j}$ is the highest weight of walks  from~$i$ to~$j$ of length~$t$ on $\digr(A)$.
Thus, the $(i,j)$-entry of
$A^\star$ is the supremum of the weights of all walks from~$i$ to~$j$ on $\digr(A)$.
Obviously, the supremums are reached and~$A^\star\in\MnT$ if the weight of the cycles are nonpositive, that is if~$\sr(A)\le\one$.
\end{remark}

While powers of a single matrix correspond to walks on $\digr(A)$, to deal with products of  matrices, $\digr(A)$ needs a generalization.
We restrict to products of two matrices, which suffices our purpose.

\begin{definition}\label{def:automata}
The \hlt{labeled-weighted digraph} $\Auto(A,B)$, written \hlt{lw-digraph}, of matrices $A,B\in\Mn$ is the digraph over the nodes~$\ver :=\{ 1,\dots, n\}$ with a directed arc $\e_{i,j}$
from $i$ to $j$ labeled~$a$ of weight~ $A_{i,j}$   for every
$A_{i,j} \ne \zero$ and a directed arc from $i$ to $j$ labeled $b$ of weight $B_{i,j}$  for every
$B_{i,j} \ne \zero$.
A walk~$\gm=\e_{i_1, j_1}, \dots, \e_{i_m, j_m}$ on~$\Auto(A,B)$ is labeled by the sequence of arcs' labels  along $\gm$, from~$\e_{i_1, j_1}$ to~$\e_{i_m, j_m}$,
which is a word in~$\{a,b\}^+$. (In particular, every walk labeled by~$w$ has length~$\wlen(w)$.) The weight $\w(\gm)$ of $\gm$ is the sum of its arcs' weights.
\end{definition}
\noindent Note that $\Auto(A,B)$ may have parallel arcs, but with different labels, and that $\Auto(A,A) = \digr(A)$.
With this definition, we have the following proposition.
\begin{proposition}\label{pr:WalkInterpret}
  Given a word $w \in \{ a,b\}^+$ of length $\wlen(w)$ and matrices $A,B \in \Mn$,
  the $(i,j)$-entry of the matrix $w\eval{A}{B}$ is the maximum over the weights of all walks $\dl_{i,j}$ on~$\Auto(A,B)$ from $i$ to $j$ labeled by~$w$.
\end{proposition}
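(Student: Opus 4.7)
The plan is to prove Proposition \ref{pr:WalkInterpret} by induction on the word length $\wlen(w)$, using the fact that tropical matrix multiplication mirrors walk concatenation (Remark \ref{rk:AkWalk}) and extending this to the two-letter, two-matrix setting via the structure of $\Auto(A,B)$.

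For the base case $\wlen(w) = 1$, the word is either $a$ or $b$, so $w\eval{A}{B}$ is $A$ or $B$ respectively. The $(i,j)$-entry is then $A_{i,j}$ (resp.\ $B_{i,j}$), which by Definition \ref{def:automata} is precisely the weight of the unique arc from $i$ to $j$ with label $a$ (resp.\ $b$), or $\zero$ if no such arc exists (with the convention that the maximum over the empty set of walks is $\zero$). Every walk of length $1$ labeled by a single letter consists of exactly one labeled arc, so the claim holds.

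For the inductive step, assume the proposition is established for all words of length at most $k$, and let $w \in \{a,b\}^+$ have length $k+1$. Write $w = w' c$ with $c \in \{a,b\}$ and $\wlen(w') = k$, and let $C = A$ if $c = a$ and $C = B$ if $c = b$. By the definition of tropical matrix product,
\begin{equation*}
\big(w\eval{A}{B}\big)_{i,j} \ = \ \big(w'\eval{A}{B} \cdot C\big)_{i,j} \ = \ \Add_{h=1}^{n} \big(w'\eval{A}{B}\big)_{i,h} \mlt C_{h,j}.
\end{equation*}
The inductive hypothesis identifies $(w'\eval{A}{B})_{i,h}$ with the maximum weight of a walk from $i$ to $h$ on $\Auto(A,B)$ labeled by $w'$, while $C_{h,j}$ is the weight of the arc from $h$ to $j$ labeled by $c$ (or $\zero$ if this arc does not exist).

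The last step is to note that concatenating any walk $\delta_{i,h}'$ labeled by $w'$ with the arc from $h$ to $j$ labeled $c$ produces a walk from $i$ to $j$ labeled by $w$ whose weight equals $\w(\delta_{i,h}') + C_{h,j}$, and conversely every walk labeled by $w$ decomposes uniquely as such a concatenation at its penultimate node. This bijection between walks from $i$ to $j$ labeled by $w$ and compatible pairs $(\delta_{i,h}', \e_{h,j})$ shows the sum above ranges over exactly the weights of such walks, establishing the claim. There is no serious obstacle here; the only points to be careful about are the $\zero$-weight convention for nonexistent arcs/empty walk sets and the additivity of weights under concatenation, both of which follow immediately from Definition \ref{def:automata}.
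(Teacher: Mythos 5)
Your proof is correct. The paper states Proposition \ref{pr:WalkInterpret} without an explicit proof, treating it as a direct consequence of Definition \ref{def:automata} and the same reasoning as in Remark \ref{rk:AkWalk}; your induction on word length is exactly the argument the authors leave to the reader, and you have handled the base case, the $\zero$-weight conventions, and the decomposition-at-the-penultimate-node bijection correctly.
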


lw-digraphs are the core of  \hlt{weighted automata} -- a widely studied extension of standard  (i.e., boolean)  automata.
(See \cite{HBookAutom} for an overview on automata theory.)

\begin{remark}\label{rem:WalkInterpret}
In automata theory, nodes are called states, arcs are called transitions, and one consider set of walks (also called runs) from an initial to a final state.
The initial and final state might also have weights. Thus, a weighted automaton is defined by $(\Auto(A,B),\bf{r},\bf{c})$, where $\bf{r}$ is a row vector and~$\bf{c}$ is a column vector.
The weight of a word $w$ is the sum (here max) of weights of walks labeled by $w$, given by ${\bf r}\left(w\eval{A}{B}\right){\bf c}$.
\end{remark}

\subsection{Word separation}\label{ssec:separation}
Due to Remark~\ref{rem:WalkInterpret}, existence of a nontrivial semigroup identity for ~$\Mn$ can be understood as the impossibility to~\hlt{separate} two words by  weighted automata.
Recall that a standard  automaton is said to separate a pair of words~$(u,v)$ if it accepts~$u$ but not~$v$.
Determining the size of the  smallest automaton that separates a pair of words is an old open problem in automata theory. See~\cite{wSEP} for a survey on the subject.
Let $\sep(m)$ be the smallest size of automata, in terms of state number $n$, necessary to separate all pairs of words of length~$m$ (or at most~$m$, since separating words of  different length is easier).
The best known upper bound is $\sep(m)=\OO\big(m^{2/5}\ln^{3/5}(m)\big)$ \cite[Theorem~3]{Robson89}.

Since there are finitely many automata having~$n = \sep(m)$ states, and~$2^m$ words of length~$m$ in $\{a,b \}^*$, obviously, there is a pair of words that cannot be separated by such an automaton.
This simple argument on cardinality gives words of length~$2^{2n^2+\oo(n)}$. This means that there exists  a nontrivial semigroup identity of length~$2^{2n^2+\oo(n)}$, satisfied by the monoid $\Mn$ of $\nxn$ boolean matrices.
Analyzing  powers of boolean matrices, shorter identities of order~$e^{n+\oo(n)}$ are obtained, so that~$\sep(m)\ge \ln(m)+\oo(\ln (m))$. To the best of our knowledge, this is the best lower bound.

A weighted automaton is said to separate two words, if it assigns these words with different weights.
As there are infinitely many weighted automata having a given number of states, it is not obvious that
not all pairs of words can be separated by automata of a given size.
Denote  by~$\sep_{\Sem}(m)$ the smallest size of weighted automata, having weights in the semigroup~$\Sem$,   necessary to separate all pairs of words of length~$m$ (or at most~$m$).
It follows from Remark~\ref{rem:WalkInterpret} that $\sep_\Sem(m)>n$ iff there exists a semigroup identity for~$\Mn(\Sem)$.
Theorem~\ref{thm:IdExistence} implies that $\sep_\Trop(m)\ge c \ln^{1/2}(m)$ for some~$c>0$.
As far as we know, there is no better upper bound than the one for boolean matrices.

\section{Ranks of large powers of a matrix}\label{sec:3}

In  this section we assume that $A$ is a matrix in $\Mn$, and set $\lcmn=\lcm(1,\dots,n)$.
\subsection{Direct approach}

\begin{lemma}\label{lem:permAn} 
  If  $\trrk(A^\lcmn)=n$, then~$\per(A^\lcmn)=\mtr(A^\lcmn)$.
\end{lemma}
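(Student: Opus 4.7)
The strategy is to identify the unique permanent-achieving permutation of $A^{\lcmn}$ and show it must be the identity. The key tool is Theorem~\ref{pr:perAB}: whenever a product is nonsingular, both factors are nonsingular and $\tau_{AB}=\tau_B\circ\tau_A$.

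First I would apply Theorem~\ref{pr:perAB} iteratively. Writing $A^{\lcmn}=A\cdot A^{\lcmn-1}$, nonsingularity of $A^{\lcmn}$ propagates down to nonsingularity of $A$ and $A^{\lcmn-1}$; continuing this factorization shows that $A^{k}$ is nonsingular for every $0\le k\le\lcmn$. Moreover, the composition formula gives by straightforward induction on $k$ that
$$\tau_{A^{k}}=\tau_{A}^{\,k},$$
where on the right we compose $\tau_A$ with itself $k$ times in the symmetric group $\Sn$.

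Next I would exploit the arithmetic significance of the exponent $\lcmn=\lcm(1,\dots,n)$. The order of any element of $\Sn$ is the least common multiple of its cycle lengths, each of which is at most $n$ and therefore divides $\lcmn$. Consequently the order of $\tau_A$ divides $\lcmn$, and so $\tau_{A}^{\,\lcmn}=\Id$. Combining with the previous step yields $\tau_{A^{\lcmn}}=\Id$.

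Finally, by the definition of $\tau_B$ for the nonsingular matrix $B=A^{\lcmn}$,
$$\per(A^{\lcmn})=\w(\tau_{A^{\lcmn}})=\sum_{i}(A^{\lcmn})_{i,\Id(i)}=\sum_{i}(A^{\lcmn})_{i,i}=\mtr(A^{\lcmn}),$$
which is the desired equality. There is no real obstacle here once one notices that the choice of exponent $\lcmn$ is precisely what forces $\tau_A^{\lcmn}$ to collapse to the identity; the technical content is entirely carried by Theorem~\ref{pr:perAB}, and the rest is an elementary observation about the order of a permutation on $n$ letters.
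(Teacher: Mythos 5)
Your proof is correct and follows exactly the route the paper has in mind: the paper's proof consists of the single line ``Follows from Theorem~\ref{pr:perAB}'' (with citations to \cite[Lemma~2.8]{mxID} and \cite[Corollary~4]{Shitov}), and the argument you spell out --- iterating $\tau_{AB}=\tau_B\circ\tau_A$ to get $\tau_{A^{\lcmn}}=\tau_A^{\lcmn}$, then observing that the order of $\tau_A\in\Sn$ divides $\lcmn$ so $\tau_{A^{\lcmn}}=\Id$ --- is precisely what that citation is standing in for.
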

\begin{proof}
Follows from Theorem~\ref{pr:perAB}. (See also~\cite[Lemma 2.8]{mxID} or~\cite[Corollary 4]{Shitov}.)
\end{proof}
We start with an  easy lemma that links weights of permutations to weights of simple cycles.

\begin{lemma}\label{l:cycToPerm}
Given  a permutation $\tau \in \Sn$, let~$\mu_i$ be the average weight of the unique (simple) cycle  of~$\tau$ that  contains the node~$i$. Suppose
\begin{equation}\label{eq:CycToTau}
 \w(\cyc)<\sum_{i\in\cyc} \mu_i,
\end{equation}
for  every simple cycle~$\cyc$ of $\digr(A)$ which is not a cycle of $\tau$,
then $A$ is nonsingular and~$\tau=\tau_A$. \end{lemma}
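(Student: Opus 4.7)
The plan is to show that $w(\tau)$ strictly dominates $w(\pi)$ for every permutation $\pi \neq \tau$, by decomposing $\pi$ into cycles and separating those it shares with~$\tau$ from those it does not.

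First I would record the key identity $\sum_i \mu_i = w(\tau)$. This is immediate from the definition of $\mu_i$: if $\cyc^\tau_1,\dots,\cyc^\tau_m$ are the simple cycles composing~$\tau$, then for each $i$ lying on $\cyc^\tau_j$ we have $\mu_i = w(\cyc^\tau_j)/|\cyc^\tau_j|$, so $\sum_{i\in \cyc^\tau_j}\mu_i = w(\cyc^\tau_j)$, and summing over $j$ yields $\sum_i\mu_i = w(\tau)$.

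Next, fix any $\pi\in\Sn$ with $\pi\neq\tau$ and decompose~$\pi$ into its disjoint simple cycles. Partition these cycles into two classes: those that happen to be cycles of~$\tau$ (call the set $C_\pi$) and those that are not (call it $D_\pi$). Because $\pi\neq\tau$, the set $D_\pi$ is nonempty. For any $\cyc\in C_\pi$, the cycle $\cyc$ is a $\tau$-cycle, so $\sum_{i\in\cyc}\mu_i = w(\cyc)$. For any $\cyc\in D_\pi$ the hypothesis \eqref{eq:CycToTau} gives the strict inequality $w(\cyc)<\sum_{i\in\cyc}\mu_i$. Summing over all cycles of $\pi$ and using that $D_\pi\neq\emptyset$ yields
\[
w(\pi)\;=\;\sum_{\cyc\in C_\pi}w(\cyc)+\sum_{\cyc\in D_\pi}w(\cyc)\;<\;\sum_{\cyc\in C_\pi}\sum_{i\in\cyc}\mu_i+\sum_{\cyc\in D_\pi}\sum_{i\in\cyc}\mu_i \;=\;\sum_i\mu_i\;=\;w(\tau).
\]

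Since this strict inequality holds for every $\pi\neq\tau$, the permutation $\tau$ is the unique maximizer of the weight, so by Definition~\ref{def:matOper}(ii) the matrix $A$ is nonsingular with $\tau_A=\tau$. There is no real obstacle here; the only subtle point is noticing that $\pi\neq\tau$ forces at least one cycle of $\pi$ to be absent from~$\tau$, which activates the strict inequality from the hypothesis.
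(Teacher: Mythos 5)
Your proof is correct and follows essentially the same route as the paper: decompose an arbitrary $\pi\in\Sn$ into its simple cycles, use equality $\w(\cyc)=\sum_{i\in\cyc}\mu_i$ on the cycles shared with $\tau$ and the strict hypothesis on the others, and conclude $\w(\pi)<\w(\tau)$ unless $\pi=\tau$. The only difference is cosmetic: you make the partition into $C_\pi$ and $D_\pi$ explicit, whereas the paper sums $\w(\cyc_t)\le\sum_{i\in\cyc_t}\mu_i$ over all cycles at once and then observes when the inequality is tight.
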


\begin{proof}
A permutation~$\pi \in\Sn$  corresponds  to a disjoint union of simple cycles~$\cyc_1,\dots, \cyc_m$ (cf. Remark \ref{per.digr}).
If~$\cyc_t$ is also a cycle of~$\tau$, then  $\mu_i=\frac{\w(\cyc_t)}{\len{\cyc_t}}$ for every~$i\in\cyc_t$, by definition of~$\mu_i$. Therefore $\w(\cyc_t)=\sum_{i\in\cyc_t}\mu_i$.
Using~\eqref{eq:CycToTau}, we see that
$$\w(\pi)=\sum_{t=1}^m\w(\cyc_t)\le\sum_{t=1}^m\sum_{i\in\cyc_t}\mu_i =\sum_{i=1}^n \mu_i=\w(\tau),$$
where  equality can only be reached if all cycles of~$\pi$ are cycles of~$\tau$; that is, if~$\pi=\tau.$
This implies that $\tau$ is maximally  unique;  hence, $A$ is nonsingular  and~$\tau=\tau_A$.
\end{proof}

\begin{remark}\label{rem:crit}
 The critical nodes of $\crit(A^\lcmn)$  and ~$\crit(A)$ are the same, while $\crit(A^\lcmn)$ also has a loop at each node belonging to a critical cycle of~$A$.
\end{remark}

More generally:
\begin{lemma}[{\cite[Lemma 3.6]{MPatW}}]\label{l:CAk}
The matrix  of $\crit(A^{t})$ is the $t$'th power of the matrix  of  $\crit(A)$, for any 
$t\geq 1$. Moreover, $\sr(A^t)=t\sr(A)$.
\end{lemma}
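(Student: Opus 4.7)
The plan is to reduce the statement to a careful correspondence between walks of length $t$ in $\digr(A)$ and arcs of $\digr(A^t)$. I would first dispose of the spectral identity. The inequality $\sr(A^t)\le t\sr(A)$ follows because every simple cycle in $\digr(A^t)$ of length $j$ and weight $w$ unfolds to a closed walk of length $jt$ in $\digr(A)$; since any closed walk decomposes into simple cycles whose mean weights are bounded by $\sr(A)$, one obtains $w\le jt\sr(A)$. For the reverse inequality, I would start from a critical cycle $v_1\to\cdots\to v_\ell\to v_1$ of $\digr(A)$: the arcs $v_i\to v_{i+t\bmod\ell}$ sit in $\digr(A^t)$ with weight at least $t\sr(A)$ and close into a cycle of length $\ell/\gcd(\ell,t)$ whose mean weight is exactly $t\sr(A)$.

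Now write $C$ for the matrix of $\crit(A)$, and aim to show that the matrix of $\crit(A^t)$ equals $C^t$. Two preliminaries are useful: every simple cycle of $\crit(A)$ has mean weight $\sr(A)$ (decompose and compare mean weights), and $\crit(A)$ is completely reducible, because both endpoints of any critical arc lie on a common critical cycle and hence inside a single strongly connected component. The first preliminary implies that every closed walk in $\crit(A)$ has weight equal to its length times $\sr(A)$. Suppose $(i,j)$ is critical in $\digr(A^t)$. Then a critical cycle of $\digr(A^t)$ through $(i,j)$ lifts, via concatenation of the length-$t$ walks in $\digr(A)$ realizing each of its arcs, to a closed walk in $\digr(A)$ of mean weight $\sr(A)$. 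The same cycle-decomposition argument as above forces this lifted walk to lie entirely in $\crit(A)$; in particular, the length-$t$ sub-walk realizing the arc $(i,j)$ lies in $\crit(A)$, so $(C^t)_{i,j}\ge (A^t)_{i,j}$, and equality follows since $C\le A$ entrywise.

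For the converse direction, assume $(C^t)_{i,j}\ne\zero$, witnessed by a walk $\gamma$ of length $t$ in $\crit(A)$. By complete reducibility, $i$ and $j$ belong to a common s.c.c.\ $S$ of $\crit(A)$ with cyclicity $\sigma$. Any walk from $j$ to $i$ in $S$ has length $s_0$ satisfying $t+s_0\equiv 0\pmod\sigma$, so $\gcd(t,\sigma)$ divides $s_0$; the Chinese Remainder Theorem then produces, for large enough length, a walk from $j$ to $i$ in $S$ of length $s$ with $s\equiv s_0\pmod\sigma$ and $s\equiv 0\pmod t$. Concatenating $\gamma$ with this return walk yields a closed walk of length divisible by $t$ in $\crit(A)$, whose projection to $\digr(A^t)$ is a cycle of mean weight $t\sr(A)=\sr(A^t)$ passing through $(i,j)$. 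Hence $(i,j)$ is critical in $\digr(A^t)$, and combined with the previous paragraph this gives the claimed matrix identity. The delicate step is precisely this last one: engineering a return walk whose length is simultaneously compatible with the cyclicity $\sigma$ and the power $t$, for which the CRT argument and the complete reducibility of $\crit(A)$ are essential.
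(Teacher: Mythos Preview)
The paper does not give its own proof of this lemma; it is simply quoted from \cite{MPatW}. Your argument is correct and is the natural one, built on the correspondence between arcs of $\digr(A^t)$ and optimal length-$t$ walks on $\digr(A)$, together with the fact that any closed walk of mean weight $\sr(A)$ decomposes entirely into critical simple cycles. The delicate converse inclusion is handled properly: from $t+s_0\equiv 0\pmod\sigma$ one indeed obtains $\gcd(t,\sigma)\mid s_0$, so the Chinese Remainder Theorem supplies a return walk in the relevant s.c.c.\ of $\crit(A)$ whose length is a multiple of~$t$; the resulting closed walk in $\crit(A)$ then projects to a closed walk in $\digr(A^t)$ of mean weight $t\sr(A)=\sr(A^t)$, and one last decomposition into simple cycles places the arc $(i,j)$ on a critical simple cycle of~$A^t$. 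A minor remark: this projection is only a closed walk, not a simple cycle a priori, so that final decomposition step is needed---but you already invoke exactly this argument elsewhere, so the proof is complete.
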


From this simple lemma, in the spirit of~\cite{wCSR}, we deduce:
\begin{lemma}\label{l:2simple}
Suppose that  $B=A^\lcmn$ for some~$A\in\Mn$, 
and that $t\ge 2n-2$. For every $(B^t)_{i,j} \neq \zero$   there exists a walk $\gm_{i,j}$ from $\vx{i}$ to $\vx{j}$ on~$\digr(B)$, with  weight $(B^t)_{i,j}$ and  length~$t$,   of the form
\begin{equation}\label{eq:2simple}
\gm_{i,j} = \gm_{i,h} \circ \rho^{s} \circ \gm _{h,j},
\end{equation}
where $\rho$ is a loop at node~$\vx{h}$, and $\gm_{i,h}$, $\gm_{h,j}$ are simple walks, possibly empty.\footnote{Note that  $\gm_{i,j}$ needs not be 1-cyclic.}

\end{lemma}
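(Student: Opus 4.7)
My plan is to take an optimal walk $\gm^*$ of length $t$ from $i$ to $j$ in $\digr(B)$ achieving $(B^t)_{i,j}$ and rewrite it into the required form without loss of weight. I would first decompose $\gm^*$ by greedy cycle-removal into a simple walk $\pi$ from $i$ to $j$ together with a multiset of simple cycles $c_1,\ldots,c_k$ attached along $\pi$ (and to one another), so that $|\pi| + \sum_r|c_r| = t$ and $\w(\pi) + \sum_r \w(c_r) = \w(\gm^*)$, with $|\pi| \le n-1$. Since $t \ge 2n-2$, this forces $\sum_r |c_r| \ge n-1 \ge 1$, so at least one cycle exists. Let $\mu^* := \max_r \w(c_r)/|c_r|$ denote the largest mean weight among these cycles, realised by some simple cycle $c^*$ of length $\ell^* \le n$.

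Assuming I can produce a loop $\rho$ at some node $h$ visited by $\gm^*$ in $\digr(B)$ of weight $\w(\rho) \ge \mu^*$, the rest is routine: take $\gm_{i,h}$ and $\gm_{h,j}$ to be the two simple sub-walks of $\pi$ cut off at $h$ (extending $\pi$ through an incident cycle if $h$ is not already on $\pi$), and set $s := t - |\gm_{i,h}| - |\gm_{h,j}|$, which is nonnegative because $|\gm_{i,h}| + |\gm_{h,j}| \le n-1$ while $t \ge 2n-2$. One then checks
$$\w(\gm_{i,h}\circ\rho^{\,s}\circ\gm_{h,j}) \;\ge\; \w(\pi) + (t-|\pi|)\mu^* \;\ge\; \w(\pi) + \sum_r |c_r|\,\mu^* \;\ge\; \w(\gm^*),$$
so equality must hold by optimality of $\gm^*$, and $\gm_{i,h}\circ\rho^{\,s}\circ\gm_{h,j}$ is a walk of the required form.

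The production of $\rho$ is where the hypothesis $B = A^\lcmn$ plays a decisive role: every simple cycle of $\digr(A)$ has length at most $n$ and therefore divides $\lcmn$, so iterating it the appropriate number of times yields a closed walk of length exactly $\lcmn$ in $\digr(A)$ based at any of its nodes---that is, a loop in $\digr(B)$. To manufacture one of weight at least $\mu^*$, I would lift $c^*$ back to $\digr(A)$ by replacing each of its arcs by an optimal length-$\lcmn$ walk, producing a closed walk $W$ of length $\ell^*\lcmn$ and weight $\ell^*\mu^*$ based at a node of $c^*$; decompose $W$ into simple cycles $\sigma_1,\ldots,\sigma_m$ of $\digr(A)$; and invoke an averaging argument to extract one $\sigma_j$ of mean weight at least $\mu^*/\lcmn$. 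Its $(\lcmn/|\sigma_j|)$-th iterate based at any of its nodes is then a loop in $\digr(B)$ of weight at least $\mu^*$.

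The main obstacle I foresee is ensuring that the base of the loop so produced actually lies on $\gm^*$; otherwise the simple segments $\gm_{i,h}$ and $\gm_{h,j}$ might fail to exist in $\digr(B)$. I would handle this by performing the simple-cycle decomposition of $W$ in the order dictated by $c^*$, extracting cycles only between successive visits to the nodes $u_0,u_1,\ldots,u_{\ell^*-1}$ of $c^*$ along $W$; this forces every $\sigma_j$ in the decomposition to pass through some $u_k$, which lies in $V(c^*)$ and hence among the nodes visited by $\gm^*$. Restricting the averaging argument to this family then delivers a $\sigma$ of mean at least $\mu^*/\lcmn$ rooted at a node of $\gm^*$, completing the construction of $\rho$ and thus the proof.
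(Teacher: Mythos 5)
The overall shape of your strategy (find a heavy loop on the optimal walk, replace cycling by loops) is in the right spirit, but the crucial step--manufacturing a loop $\rho$ in $\digr(B)$ at a node \emph{actually visited by} $\gm^*$ with weight $\ge\mu^*$--is where the argument breaks down, and your proposed fix does not repair it. When you lift $c^*$ to a closed walk $W$ of length $\ell^*\lcmn$ in $\digr(A)$ and decompose $W$ into simple cycles of $\digr(A)$, you cannot in general arrange for every cycle in the decomposition to pass through one of the sampled nodes $u_0,\dots,u_{\ell^*-1}$. If a segment of $W$ between two consecutive visits to $V(c^*)$ has the shape $u_k\to a\to b\to a\to u_{k+1}$, the simple cycle $a\to b\to a$ is forced into any decomposition and touches no $u_l$; ``extracting cycles only between successive visits to the $u_k$'' does not avoid this. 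Once you allow cycles missing $V(c^*)$, the averaging argument can hand you a heavy cycle $\sigma$ of $\digr(A)$ whose nodes lie on $W$ but not on $c^*$ (and hence not on $\gm^*$), and you then have no simple walks $\gm_{i,h},\gm_{h,j}$ in $\digr(B)$ at your disposal. The later inequality $\w(\gm_{i,h})+\w(\gm_{h,j})\ge\w(\pi)$ is also only clear when $h$ already lies on $\pi$; the ``extend $\pi$ through an incident cycle'' step adds arcs of unknown sign and would need justification.

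The paper's proof sidesteps exactly this difficulty and does so with two ideas that are absent from your sketch. First, it proceeds by induction on $n$, with the inductive step handling the case when the lifted walk on $\digr(A)$ avoids all critical nodes (then one simply passes to the principal submatrix on non-critical nodes). Second, and more importantly, it does \emph{not} try to locate a heavy-looped node on the original optimal walk: it \emph{modifies} the lifted walk by inserting a critical cycle $\cyc$ of $\crit(A)$ repeated $\lcmn/\len{\cyc}$ times at a critical node. That insertion creates a stretch of length exactly $\lcmn$ consisting entirely of critical nodes, so when the enlarged walk is sampled at positions $0,\lcmn,2\lcmn,\dots$ to produce a walk on $\digr(B)$, at least one sampled node is critical, hence carries a loop of maximal weight $\sr(B)$. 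All surplus cycling is then contracted into loops at that node, and one redundant loop is removed to restore length $t$. This construction places the loop's base on the \emph{new} walk by design, rather than hoping it lies on the old one--which is precisely the point your argument cannot establish.
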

\begin{proof}
Proof by induction on the matrix size $n$.
The case of~$n=1$ is trivial.

Assume $n >1 $.
Since $B^t=A^{t \lcmn}$, for $(B^t)_{i,j} \neq \zero$  there is a walk $\gm_0 $   on~$\digr(A)$ from~$\vx{i}$ to~$\vx{j}$ of length~$t \lcmn$ and weight $(B^t)_{i,j}$.
Note that $\digr(A)$ and $\digr(B)$ have the same critical nodes (Remark~\ref{rem:crit}), so  we  often say critical node without specific details.
There are two cases.
\pSkip
\noindent
\textbf{Case I:} $\gm_0$ passes through a critical node $\vx{c}$.

\noindent
Let $\cyc$ be a simple cycle, possibly a loop, on~$\crit(A)$ which contains $\vx{c}$.
We insert $\cyc$ repeated $\lcmn/\len{\cyc}$ times in~$\gm_0$ to obtain a walk~$\gm_1$ on $\digr(A)$.
This walk has  length~$(t+1)\lcmn$ and weight~$B_{i,j}+\lcmn\sr(A)$, where $\sr(A)$ is the spectral radius of $\digr(A)$ defined by \eqref{eq:sr}.

 The sequence of nodes positioned at $0, \lcmn,  2\lcmn, \dots, (t+1)\lcmn  $ in ~$\gm_1$ determines a walk~$\gm_2$ on~$\digr(B)$ that passes  through a critical node $\vx{c'}$ (not necessarily $\vx{c}$). This walk  has length~$t+1$ and weight at least $B_{i,j}+\lcmn\sr(A)=B_{i,j}+\sr(B)$.

\begin{itemize}[leftmargin=.2in]
  \item[--] If $\vx{c'}$ appears twice (or more) in $\gm_2$, then the closed subwalk from the first occurrence of $\vx{c'}$ in $\gm_2$ to last occurrence can be replaced by loops on $\vx{c'}$.
  \item[--] If a node $\vx{k}$  appears twice (or more) on the same side of the occurrence of~$\vx{c'}$ in~$\gm_2$, then the closed subwalk from the first occurrence of $k$ to last  occurrence can be replaced by loops on~$\vx{c'}$.
\end{itemize}
These exchanges do not decrease the weight of $\gm_2$, since  the loop at $c'$ is critical.
They provide a new walk~$\gm_3$ which can be decomposed as in~\eqref{eq:2simple}, but has length~$t+1$ and weight at least $(B^t)_{i,j}+\sr(B)=(B^t)_{i,j}+B_{c,c}$.
Since $t+1\ge 2(n-1)+1$, while simple walks have length at most~$n-1$, $\gm_3$ has  at least one loop~$\rho$ which can be removed to get the desired walk~$\gm_4$ of  the right length.
The weight of~$\gm_4$ is proved to be at least~$(B^t)_{i,j}$, but, clearly, it cannot be strictly greater.

\pSkip
\textbf{Case II:} $\gm_0$ does not pass through any critical node.

\noindent
Then, $\gm_0$ is a walk on the graph $\digr(\tlA)$, where $\tlA$ is the matrix  obtained  by deleting  from~$A$ the rows and columns corresponding to critical nodes.
Thus, $(B^t)_{i,j}=(\tlA^{ t \lcmn })_{i,j}$ and, by the induction hypothesis,
$(B^t)_{i,j}$ is the weight of a walk~$\gm_{i,j}$ of length~$t$ on~$\digr(\tlA^\lcmn)$ of the form~\eqref{eq:2simple}.
By definition of~$\tlA$ and Remark~\ref{rk:AkWalk}
the weights of the arcs of~$\digr(\tlA^\lcmn)$ are at most the weights of the corresponding arcs of~$\digr(B)$,
so that the weight of~$\gm_{i,j}$ as a walk on~$\digr(B)$ is at least~$(B^t)_{i,j}$. As it cannot be strictly greater,
$\gm_{i,j}$ has weight~$(B^t)_{i,j}$.
\end{proof}

\begin{proposition}\label{pr:TropToFactor} 
If $\trrk(A^\lcmn)<n$, then $\frk(A^{t\lcmn})<n$ for any $t\ge 3n-2$.
\end{proposition}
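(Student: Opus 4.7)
Set $B := A^{\lcmn}$, so that the hypothesis reads $\trrk(B) < n$, i.e., $B$ is singular; the aim is to show $\frk(B^t) \leq n - 1$ for $t \geq 3n - 2$. I would first apply Lemma~\ref{l:2simple} to $B$: since $t \geq 3n - 2 \geq 2n - 2$, each nonzero $(B^t)_{i,j}$ is the weight of a walk $\gm_{i,h} \circ \loop^s \circ \gm_{h,j}$ of length $t$, with $\loop$ a loop at some node $h$ and $\gm_{i,h}, \gm_{h,j}$ simple of length at most $n-1$. The bound $t \geq 3n - 2$ forces $s \geq n$, so $B_{h,h} \neq \zero$ and $h \in H := \{h : B_{h,h} \neq \zero\}$. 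Writing $L_{h,\ell}(i)$ and $R_{h,\ell}(j)$ for the maximum weights of simple walks of length $\ell$ from $i$ to $h$ and from $h$ to $j$ respectively, and setting
\[
C_{i,h} := \max_{\ell_1 \leq n-1} \bigl[L_{h,\ell_1}(i) + (t-\ell_1)B_{h,h}\bigr], \qquad D_{h,j} := \max_{\ell_2 \leq n-1} \bigl[R_{h,\ell_2}(j) - \ell_2 B_{h,h}\bigr],
\]
one obtains $(B^t)_{i,j} = \max_{h \in H} \bigl[C_{i,h} + D_{h,j}\bigr]$, so $\frk(B^t) \leq |H|$. We are done if $|H| < n$.

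When $|H| = n$, every node carries a loop, so $\tau = \idn$ is a valid permutation in $\Sn$, and the contrapositive of Lemma~\ref{l:cycToPerm} (applied with $\mu_i := B_{i,i}$) forces, from the singularity of $B$, the existence of a simple non-loop cycle $\cyc \subset \digr(B)$ of length $k \geq 2$ with $\w(\cyc) \geq \sum_{h \in \cyc} B_{h,h}$. Choosing $h_0 \in \cyc$ minimizing $B_{h,h}$ on $\cyc$ gives $\w(\cyc) \geq k B_{h_0,h_0}$: a single traversal of $\cyc$ outweighs $k$ loops at $h_0$. I would then prove $(B^t)_{i,j} = \max_{h \in H \setminus \{h_0\}} \bigl[C_{i,h} + D_{h,j}\bigr]$, giving $\frk(B^t) \leq n - 1$. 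Fixing $(i,j)$ and a walk $\gm = \gm_{i,h_0} \circ \loop_{h_0}^s \circ \gm_{h_0, j}$ attaining $C_{i,h_0} + D_{h_0, j}$, the key move is to use $s \geq n \geq k$ to replace $\loop_{h_0}^s$ by the sub-walk (partial $\cyc$ from $h_0$ to $h_1$) $\circ \loop_{h_1}^{s-k} \circ$ (partial $\cyc$ from $h_1$ back to $h_0$), for some $h_1 \in \cyc \setminus \{h_0\}$, producing a walk $\gm'$ of length $t$ pivoted at $h_1$. The weight comparison $\w(\gm') - \w(\gm) = \w(\cyc) + (s-k) B_{h_1,h_1} - s B_{h_0,h_0}$ is nonnegative by $\w(\cyc) \geq k B_{h_0,h_0}$ and $B_{h_1,h_1} \geq B_{h_0,h_0}$.

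The main obstacle will be ensuring that the resulting prefix $\gm'_{i,h_1}$ and suffix $\gm'_{h_1,j}$ are simple, as required by Lemma~\ref{l:2simple}. When $\gm_{i,h_0}$ or $\gm_{h_0,j}$ already visits nodes of $\cyc$, the naive concatenation may repeat nodes; one then selects $h_1 \in \cyc \setminus \{h_0\}$ carefully according to the sets of $\cyc$-nodes visited by $\gm_{i,h_0}$ and $\gm_{h_0,j}$, and, if necessary, truncates the original simple walks at their first or last visits to $\cyc$. This combinatorial case analysis, supported by $k \leq n$ and $s \geq n$, will be the technical heart of the proof.
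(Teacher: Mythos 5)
Your setup matches the paper's up to the point of applying Lemma~\ref{l:2simple} and invoking Lemma~\ref{l:cycToPerm} with $\tau=\idn$ to produce the ``bad'' cycle~$\cyc$ and the minimal-loop node~$h_0$ (the paper calls it~$c$). The weight comparison you compute for the pivoted walk~$\gm'$ is also correct. But the step you describe as ``the technical heart'' is a genuine gap, and it is created by your choice of rank-one decomposition. You define $C_{i,h}$ and~$D_{h,j}$ via \emph{simple} walks plus loops, which is exactly the structure delivered by Lemma~\ref{l:2simple}; consequently, to show $(B^t)_{i,j}\le C_{i,h_1}+D_{h_1,j}$ for some $h_1\ne h_0$, your modified walk~$\gm'$ must again split as (simple)$\circ$(loops at~$h_1$)$\circ$(simple). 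After splicing the partial traversals of~$\cyc$ onto $\gm_{i,h_0}$ and $\gm_{h_0,j}$, the prefix and suffix will generically revisit nodes of~$\cyc$, and truncating them destroys both the lengths and the weight comparison you just established, so it is not at all clear the case analysis closes. You cannot sidestep this by noting $\w(\gm')\le(B^t)_{i,j}$: that bound does not relate $\gm'$ back to any $C_{i,h_1}+D_{h_1,j}$ without the simple/loops/simple shape.

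The paper avoids this obstruction entirely by choosing a cruder rank-one decomposition: it takes $C'_{i,h}:=(B^n)_{i,h}$ and $D'_{h,j}:=(B^{t-n})_{h,j}$, so that each rank-one summand is just (column~$h$ of $B^n$)~$\odot$~(row~$h$ of $B^{t-n}$), and the reverse inequality $\Add_{h\ne c}[C'_{i,h}+D'_{h,j}]\le(B^t)_{i,j}$ is automatic. Then the forward inequality only asks that the modified walk of length~$t$ pass through some node $h_0\ne c$ \emph{at position exactly~$n$}; no simplicity of the two halves is needed. The paper guarantees this by first padding the loop block so that~$c$ sits at position~$n-1$, then inserting one full copy of~$\cyc$, so position~$n$ is the successor of~$c$ in~$\cyc$; the hypothesis $t\ge 3n-2$ is exactly what makes the remaining loop count nonnegative. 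If you replace your definitions of $C,D$ by $(B^n)_{i,h}$ and $(B^{t-n})_{h,j}$, and control the \emph{position} of the pivot rather than the \emph{simplicity} of the two halves, the ``combinatorial case analysis'' you deferred disappears.
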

\begin{proof} Set $B=A^\lcmn$ and $t\ge 3n-2$.
Assume that $\trrk(B)<n$. Since $B$ is singular,
by Lemma~\ref{l:cycToPerm} applied to the identity permutation, there is  a simple cycle~$\cyc$ on~$\digr(B)$, which is not a loop,
whose weight is at least the sum of weights of loops at  its nodes.
Let $\vx{c}$ be a node of~$\cyc$ whose loop has minimal weight, and let $\vx{h_0}$ be the node proceeding  ~$c$ in~$\cyc$.
We prove that for any~$i,j$:
\begin{equation}\label{eq:ineqNoc}
 (B^t)_{i,j}\le \Add_{h\neq c}\left((B^{n})_{i,h}+(B^{t-n})_{h,j}\right).
\end{equation}

If~$(B^t)_{i,j} = \zero$, then this inequality holds trivially.
Otherwise, let~$\gm_{i,j}=\gm_{i,h}\circ\rho^s\circ\gm_{h,j}$ be a walk given by Lemma~\ref{l:2simple}, where~$h$ is a node of~$\gm_{i,j}$.
Since~$\gm_{i,h}$ and~$\gm_{h,j}$ are simple walks, they have length at most~$n-1$, so that all nodes of~$\gm_{i,j}$
at positions~$n$ to~$t-n+1$ are the same, namely~$h$.
\begin{itemize}[leftmargin=.2in]
  \item[--]
When~$h\neq c$, \eqref{eq:ineqNoc} follows from
$$ (B^t)_{i,j}=\w(\gm_{i,j})
=\w\left(\gm_{i,h}\circ\rho^{n-\len{\gm_{i,h}}}\right)+\w\left(\rho^{t-n-\len{\gm_{h,j}}}\circ\gm_{h,j}\right)\le (B^{n})_{i,h}+(B^{t-n})_{h,j}.$$

\item[--]
If~$h=c$, then $\rho$ is the loop at~$c$. We have $n-1 +\len{\gm_{h,j}}+\len{\cyc}\le 3n-2\le t$ and, by definition of~$c$,  $\w(\cyc)\ge\len{\cyc}\w(\rho)$, so that
$$\begin{array}{ll}
 (B^t)_{i,j}=\w(\gm_{i,j})
&=\w\left(\gm_{i,h}\circ\rho^{n-1-\len{\gm_{i,h}}}\right)+\w\left(\rho^{\len{\cyc}}\right)+\w\left(\rho^{t-(n-1)-\len{\gm_{h,j}}-\len{\cyc}}\circ\gm_{h,j}\right)\\[2mm]
&\le\w\left(\gm_{i,h}\circ\rho^{n-1-\len{\gm_{i,h}}}\right)+\w\left(\cyc\right)+\w\left(\rho^{t+1-n-\len{\gm_{h,j}}-\len{\cyc}}\circ\gm_{h,j}\right)\\[2mm]
&=\w\left(\gm_{i,h}\circ\rho^{n-1-\len{\gm_{i,h}}}\circ\cyc\circ\rho^{t+1-n-\len{\gm_{h,j}}-\len{\cyc}}\circ\gm_{h,j}\right)\\[2mm]
&\le (B^{n})_{i,h_0}+(B^{t-n})_{h_0,j}.
\end{array}$$
\end{itemize}
Thus, inequality~\eqref{eq:ineqNoc}  holds in all cases.
Since the reverse inequality  always holds, $B^t$ is the tropical sum of the $n-1$
matrices $\big((B^{n})_{i,h}+(B^{t-n})_{h,j}\big)_{i,j}$ with~$h\neq c$.
 Each of these matrices has rank~$1$, as it is the tropical product of a row
of~$B^n$ by a column of~$B^{t-n}$.
Therefore,  Definition~\ref{def:matOper}.(iv)  of factor rank implies $\frk(B^t)<n$.
\end{proof}

\subsection{CSR approach}
To prove Theorem~\ref{thm:frk} below we use the so-called
weak CSR expansion of powers, developed by T.~Nowak, S.~Sergeev and the second author \cite{wCSR}.
We first recall the relevant setup and results.

\begin{definition}\label{def:CSR}
For a completely reducible subgraph~$\subcrit$ of~$\crit(A)$,   $A\in\Mn$,  we
set $$M_\subcrit=\kln{\big(-\lambda(A)+ A\big)^{\cyclty(\subcrit)}}
,$$
and define the matrices
$C = C_\subcrit, S = S_\subcrit, R = R_\subcrit$ in $\Mn$ as follows
\begin{equation}
\label{e:csrdef}
C_{i,j}=
\begin{cases}
M_{i,j} &\text{if $j \in \subcrit$,}\\
\zero &\text{otherwise,}
\end{cases}\qquad
S_{i,j} =
\begin{cases}
A_{i,j} &\text{if $(i,j) \in \subcrit$,}\\
\zero &\text{otherwise,}
\end{cases}
\qquad
R_{i,j}=
\begin{cases}
M_{i,j} &\text{if $i \in \subcrit$,}\\
\zero &\text{otherwise.}
\end{cases}
\end{equation}
The matrices $C_\subcrit$, $S_\subcrit$ and~$R_\subcrit$ are named   the \hlt{CSR terms} of~$A$ with respect to~$\subcrit$.
\end{definition}

This CSR expansion provides a useful tool for analyzing tropical matrices, especially their powers.
For this purpose, we are  interested in products ~$C_\subcrit (S_\subcrit)^t R_\subcrit$ with $t \in \N$,
whose interpretation in terms of walks on~$\digr(A)$ is given by Theorem~\ref{thm:representation} below.

\begin{remark}\label{rem:acyclic}
If~$\digr(A)$ is acyclic, then~$\sr(A)=\zero$ and the matrices~$M_H, C_H, S_H, R_H$ are not defined by~\eqref{e:csrdef}. In this case,~$\crit(A)$ has no arcs, and we formally set these matrices to  be zero matrix, which is consistent with Theorems~\ref{thm:wCSR} and~\ref{thm:representation} below.

Alternatively to \eqref{e:csrdef}, when $\sr(A)\neq \zero$, the matrices $C_\subcrit$ and $R_\subcrit$ can be extracted respectively
from the columns and the rows of~ $M_H$ which are indexed by the nodes of $\subcrit$, while $S_\subcrit$ can be obtained from the square submatrix indexed by the critical nodes of~$\crit(A)$.
The products~$C_\subcrit (S_\subcrit)^tR_\subcrit$ obtained with this approach are the same as those obtained via~\eqref{e:csrdef}.
Note  that  $M_\subcrit$, $C_\subcrit$ and~$R_\subcrit$ remain unchanged when (tropically) multiplying $A$ by any  $\al \in \Real$, but $S_\subcrit$ is multiplied by~$\al$.

\end{remark}

The matrix~$B[A]$  is defined by\footnote{It is called the \hlt{Nachtigall matrix subordinate to~$A$} in~\cite{wCSR}, denoted there by $B_N$.} 
\begin{equation}\label{e:CSRschemes}
\left(B[A]\right)_{i,j} =
\begin{cases}
\zero & \text{if $i$ or $j$ is a critical node in $\crit(A)$},  \\
A_{i,j} & \text{else}.
\end{cases}
\end{equation}
 In graph view, the digraph $\digr(B[A])$  is the subgraph of~$\digr(A)$ induced by the set of non-critical nodes, i.e., the digraph obtained from $\digr(A)$ by omitting all arcs incident
to  critical nodes, in particular all arcs of $\crit(A)$.
Therefore, if $\digr(A)$ is acyclic, then $B[A] = A$.

\begin{theorem}[{\cite[Theorem~4.1]{wCSR}}]\label{thm:wCSR}
 Given $A \in \Mn$, let $C_\subcrit$, $S_\subcrit$, $R_\subcrit$ be the  CSR terms~\eqref{e:csrdef} of $A$ for~$\subcrit=\crit(A)$,
  and let~$B[A]$ be the matrix~\eqref{e:CSRschemes}.
Then
\begin{equation}\label{e:wCSR}
A^t=C_\subcrit (S_\subcrit)^t R_\subcrit \add \left(B[A]\right)^t, \quad \text{for any~$t\ge \Wi$.}
\end{equation}

\end{theorem}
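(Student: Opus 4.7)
The plan is to prove equality in \eqref{e:wCSR} by establishing both inequalities via the walk interpretation of Remark~\ref{rk:AkWalk}: the entry $(A^t)_{i,j}$ is the largest weight of a walk of length~$t$ from~$i$ to~$j$ on~$\digr(A)$. I would split such a walk according to whether it meets $\subcrit = \crit(A)$, and argue that the two summands on the right-hand side of \eqref{e:wCSR} account for the two cases.

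The inequality $A^t \ge (B[A])^t$ is immediate because $B[A] \le A$ entrywise by \eqref{e:CSRschemes}. For $A^t \ge C_\subcrit (S_\subcrit)^t R_\subcrit$, I would interpret each summand $M_{i,u} + (S_\subcrit^{\,t})_{u,v} + M_{v,j}$, with $u,v$ critical, as the weight of a walk on~$\digr(A)$ whose length can be corrected to exactly~$t$. The middle factor $(S_\subcrit^{\,t})_{u,v}$ equals $t\,\sr(A)$ whenever a length-$t$ walk from~$u$ to~$v$ exists on $\crit(A)$, while $M_{i,u}$ and $M_{v,j}$ come from the Kleene star of $(-\sr(A)+A)^{\cyclty(\subcrit)}$ and hence record normalized weights of walks on~$\digr(A)$ of length divisible by~$\cyclty(\subcrit)$. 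Inserting or removing critical cycles at $u$ and $v$ adjusts the total length without changing the normalized weight, yielding a genuine length-$t$ walk on~$\digr(A)$.

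For the reverse inequality, take an optimal length-$t$ walk~$\gm$ from~$i$ to~$j$ with $t \ge (n-1)^2+1$. If $\gm$ avoids~$\subcrit$, then $\gm$ is a walk on $\digr(B[A])$, so $\w(\gm)\le\big((B[A])^t\big)_{i,j}$ and we are done. Otherwise, identify the first visit $u$ and the last visit $v$ of~$\gm$ to~$\subcrit$, splitting~$\gm$ into a non-critical prefix $\gm_1$ from~$i$ to~$u$, a middle walk~$\mu$ from~$u$ to~$v$ touching~$\subcrit$ at both ends, and a non-critical suffix $\gm_2$ from~$v$ to~$j$, with $\len{\gm_1},\,\len{\gm_2} \le n-1$.

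The main obstacle is absorbing~$\mu$ into the structure of $C_\subcrit (S_\subcrit)^t R_\subcrit$. The key claim is that $\w(\mu)$ does not exceed $\len{\mu}\,\sr(A)$ plus the Kleene-star corrections carried by $M_\subcrit$ at~$u$ and~$v$; intuitively, since every cycle of $\digr(A)$ has mean weight at most $\sr(A)$, any excursion of $\mu$ off $\crit(A)$ can be replaced at no loss of weight by staying on the critical graph, once the lengths are reconciled. The precise bound $t \ge (n-1)^2+1$ is exactly what is needed to guarantee, via a Wielandt-type argument on the index of primitivity of a strongly connected digraph with at most~$n$ nodes, that walks of the correct length and correct residue class modulo $\cyclty(\subcrit)$ are available between the relevant pairs of critical nodes. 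Handling the case where $u$ and $v$ lie in distinct s.c.c.'s of $\subcrit$ with different cyclicities, so that the non-critical bridge must be compatible with $\cyclty(\subcrit)$ on both sides, is the main technical difficulty.
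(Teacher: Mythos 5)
This theorem is not proved in the paper at all; it is imported verbatim from \cite[Theorem~4.1]{wCSR}, so there is no in-paper proof to compare your attempt against. Judged on its own merits, your sketch has the right overall shape (two inequalities via the walk interpretation, $B[A]\le A$ entrywise, critical cycles as free length-adjusters), but several steps do not hold as written.

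First, the assertion that the non-critical prefix $\gm_1$ and suffix $\gm_2$ have length at most $n-1$ is false: a walk that avoids $\crit(A)$ before first hitting it need not be simple, and shortening it to a simple walk changes the total length, which is exactly the bookkeeping the theorem is about. Second, the ``key claim'' that any excursion of $\mu$ off $\crit(A)$ can be replaced at no loss by a walk on the critical graph is not true as stated; an off-critical excursion from one critical node to another has weight bounded only through the Kleene-star entries $M_{i,u}, M_{v,j}$ (i.e.\ through the $C$ and $R$ factors, not the $S^t$ factor), and making this precise is where the content of Theorem~\ref{thm:representation} lives. In fact, once one has Theorem~\ref{thm:representation}, the ``$\le$'' direction is immediate with no decomposition at all: after normalizing to $\sr(A)=\one$, an optimal length-$t$ walk $\gm$ that touches a critical node lies in $\walkslennode{i}{j}{t,\cyclty(\subcrit)}{\mN}$ with $\mN$ the full set of critical nodes, so $\w(\gm)\le \big(C_\subcrit(S_\subcrit)^tR_\subcrit\big)_{i,j}$ directly. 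Third, you place the Wielandt bound $t\ge(n-1)^2+1$ in the wrong direction. It is needed for ``$\ge$'': the CSR entry is realized by a walk of length only congruent to $t$ modulo $\cyclty(\subcrit)$, and to convert it into a walk of length exactly $t$ one must remove cycles and re-insert critical ones, and the transience bounds (of the kind in Propositions~\ref{p:TcRLin} and~\ref{p:TcrHAWielandt}) are what guarantee this adjustment lands exactly on $t$ once $t\ge(n-1)^2+1$. This ``$\ge$'' adjustment is the genuine technical content of the theorem, and your sketch treats it as routine while lavishing attention on a ``$\le$'' step that is comparatively soft.
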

\noindent
Note that~$\crit(A)$ is a completely reducible subgraph of $\digr(A)$, unless~$\digr(A)$ is acyclic.
In the latter  case, all matrices in \eqref{e:wCSR} are zero, so the equation holds obviously.

A main approach for proving CSR results, for instance Theorem \ref{thm:wCSR}, is the interpretation of a product
$C_H (S_H)^t R_H$ in terms of walks on $\digr(A)$, based on the  following notations ($\mN$ denotes a node subset):
\begin{itemize}
\dispace
 \item $\walkslen{i}{j}{t}$ is the set of all walks from~$i$ to~$j$ of length~$t$,
 \item $\walkslennode{i}{j}{t}{k}=\bigcup_{t_1+t_2=t}
 \left\{\gm_{i,k}\circ\gm_{k,j} \ds| \gm_{i,k}\in\walkslen{i}{k}{t_1}, \; \gm_{k,j}\in\walkslen{k}{j}{t_2} \right\}$,
\item $\walkslennode{i}{j}{t}{\mN}=\bigcup_{k\in\mN}
\walkslennode{i}{j}{t}{k} $,

 \item $\walksnode{i}{j}{\mN} =
\bigcup_{t\geq0} \walkslennode{i}{j}{t}{\mN}$,

 \item $\walkslennode{i}{j}{t,\p}{\mN} =\big\{\gm \in \walksnode{i}{j}{\mN}
\,\big|\,\wlen(\gm) = t \mod \p \big\}$, with $\p \in \N$.
\end{itemize}

\begin{theorem}[{\cite[Theorem~6.1]{wCSR}}]\label{thm:representation}
Let~$A\in\Mn$ be a matrix with~$\sr(A)= \one$, and let $C_\subcrit$, $S_\subcrit$, $R_\subcrit $ be
the CSR terms of~$A$ for $\subcrit$ a completely reducible subgraph of~$\crit(A)$.
Let~$p \in \N$ be a multiple of~$\cyclty(\subcrit)$, and let~$\mN$ be a subset of nodes of~$\subcrit$ that contains
at least one node from each  s.c.c.\ of~$\subcrit$.
Then, for every~$i,j = 1,\dots, n$ and~$t\in\N$:
\begin{equation}\label{e:representation}
\big(C_\subcrit (S_\subcrit)^t R_\subcrit \big)_{i,j}=\max\big \{w(\gm) \ds | \gm\in \walkslennode{i}{j}{t,p}{\mN}\big \}.
\end{equation}
\end{theorem}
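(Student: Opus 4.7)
My plan is to unpack both sides of~\eqref{e:representation} graphically and match them. On the algebraic side, using $\sr(A)=\one$ (so $M_\subcrit=\kln{A^{\cyclty(\subcrit)}}$), Remark~\ref{rk:AkWalk} identifies $(M_\subcrit)_{i,k}$ with the maximum weight over walks $i\to k$ on~$\digr(A)$ whose length is a nonnegative multiple of~$\cyclty(\subcrit)$, and $(S_\subcrit^t)_{k,l}$ with the maximum weight over walks $k\to l$ of length exactly~$t$ using only arcs of~$\subcrit$. Since $\subcrit$ is completely reducible, whenever such a middle walk exists $k$ and $l$ lie in a common s.c.c.\ of~$\subcrit$. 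Expanding the product using Definition~\ref{def:CSR} yields
$$\big(C_\subcrit S_\subcrit^t R_\subcrit\big)_{i,j}\;=\;\max_{k,l\in\subcrit}\,(M_\subcrit)_{i,k}+(S_\subcrit^t)_{k,l}+(M_\subcrit)_{l,j},$$
a maximum over triples of concatenable walks $\gamma_1\circ\gamma_2\circ\gamma_3$ on~$\digr(A)$ with~$\gamma_2$ confined to~$\subcrit$.

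For the inequality~$\leq$ in~\eqref{e:representation}, I would fix $k,l\in\subcrit$ attaining the maximum and realize the three walks. Their concatenation has the required weight, endpoints, and length congruent to~$t$ modulo~$\cyclty(\subcrit)$. It then remains to (a)~reroute it through a node $k'\in\mN$ lying in the common s.c.c.~$K$ of $k,l$ in~$\subcrit$, which is feasible by strong connectedness of~$K$ and because critical cycles have weight~$\one$; and (b)~shift its total length into the class~$t\bmod p$ by inserting further critical closed walks at chosen critical nodes, using the divisibilities $\cyclty(K')\mid\cyclty(\subcrit)\mid p$ available for every s.c.c.~$K'$ of~$\subcrit$.

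For the reverse inequality~$\geq$, I would take $\gamma\in\walkslennode{i}{j}{t,p}{\mN}$ passing through some~$k'\in\mN$, split $\gamma=\alpha\circ\beta$ at an occurrence of~$k'$ with $\wlen(\alpha)=n_1$, $\wlen(\beta)=n_2$, and choose $k,l$ in the s.c.c.~$K$ of~$k'$ so that walks $k'\to k$ in~$K$ have length $\equiv -n_1\pmod{\cyclty(K)}$ and walks $l\to k'$ in~$K$ have length $\equiv -n_2\pmod{\cyclty(K)}$. Concatenating $\alpha$ with a critical walk $\delta\colon k'\to k$ inside~$K$ of suitable length produces an $i\to k$ walk whose length is a multiple of~$\cyclty(\subcrit)$, so $w(\alpha)+w(\delta)\leq(M_\subcrit)_{i,k}$; symmetrically $w(\delta')+w(\beta)\leq(M_\subcrit)_{l,j}$ for a suitable $\delta'\colon l\to k'$. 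Because closed walks in~$K$ have weight~$\one$, for any walk from~$k'$ to~$k$ in~$K$ there is a return walk from~$k$ to~$k'$ in~$K$ of weight $-w(\delta)$, and likewise a walk $k'\to l$ of weight $-w(\delta')$; their composition yields a walk $\sigma\colon k\to l$ on~$\subcrit$ of length exactly~$t$ with $w(\sigma)=-w(\delta)-w(\delta')$, so $(S_\subcrit^t)_{k,l}\geq w(\sigma)$. Summing the three bounds gives $w(\gamma)\leq (C_\subcrit S_\subcrit^t R_\subcrit)_{i,j}$.

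The main obstacle is the combinatorial-arithmetic bookkeeping on walk lengths: the residues modulo the various local cyclicities~$\cyclty(K')$ must be reconciled to produce, on the one hand, walks of length exactly~$t$ on~$\subcrit$ and lengths a multiple of~$\cyclty(\subcrit)$ on either flank, and on the other hand, a total length in the prescribed class~$t\bmod p$, while compensating with weight-preserving critical closed-walk insertions. The essential ingredients are the identity $\cyclty(\subcrit)=\operatorname{lcm}\{\cyclty(K'):K'\text{ s.c.c.\ of }\subcrit\}$, the fact that in a strongly connected digraph of cyclicity~$d$ the $u\to v$ walk lengths form, past a finite threshold, a complete arithmetic progression with step~$d$, and the normalization $\sr(A)=\one$ forcing every closed walk inside~$\subcrit$ to have weight~$\one$. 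Below the relevant thresholds both sides of~\eqref{e:representation} collapse to~$\zero$, so the identity is vacuous there.
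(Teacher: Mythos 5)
This theorem is imported from \cite[Theorem~6.1]{wCSR}; the paper under review does not prove it, so your sketch can only be compared with the expected walk-interpretation argument that \cite{wCSR} itself uses. Your unpacking of $(C_\subcrit S_\subcrit^t R_\subcrit)_{i,j}$ as a maximum over concatenations $\gamma_1\circ\gamma_2\circ\gamma_3$, with the flanks accounted for by $M_\subcrit$ and the middle confined to $\subcrit$, is correct, and the overall strategy of matching walks to matrix entries via Remark~\ref{rk:AkWalk}, the normalization $\sr(A)=\one$, and the cyclicity structure of the s.c.c.'s is the right one.

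However, the escape hatch in your last sentence --- ``below the relevant thresholds both sides of~\eqref{e:representation} collapse to~$\zero$, so the identity is vacuous there'' --- is false, and it masks a genuine gap. The theorem is asserted for \emph{every} $t\in\N$, and neither side is generally $\zero$ for small $t$. For example, at $t=0$ one has $S_\subcrit^0=I$, so $(C_\subcrit R_\subcrit)_{i,j}=\max_{k\in\subcrit}\left((M_\subcrit)_{i,k}+(M_\subcrit)_{k,j}\right)$, which is finite whenever some critical node is reachable from $i$ and can reach $j$; and $\walkslennode{i}{j}{0,p}{\mN}$ contains arbitrarily long walks of finite weight whose length is a multiple of $p$. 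The place your argument actually breaks is in the reverse direction, where you assert the existence of ``a walk $\sigma\colon k\to l$ on~$\subcrit$ of length exactly~$t$''. That step only follows from the ``complete arithmetic progression with step $\cyclty(K)$'' fact once $t$ exceeds the transience threshold of the s.c.c.~$K$; for small $t$ such a walk may simply not exist for the $k,l$ you selected. The fix is to choose $k$ and $l$ not merely in the right cyclic classes modulo~$\cyclty(K)$, but so that they are actually joined by a walk of length exactly $t$ inside~$K$ (at $t=0$ this forces $k=l$; at $t=1$ it forces $(k,l)$ to be an arc of~$K$, etc.), and then verify that these particular $k,l$ can simultaneously satisfy the flanking congruences modulo~$\cyclty(\subcrit)$ by lengthening $\delta,\delta'$ along weight-$\one$ closed walks in $K$. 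This is precisely the ``combinatorial-arithmetic bookkeeping'' you flag as the main obstacle; it has to be carried out for all $t$, not discharged by a vacuity claim.
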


The theorem has the following corollaries
\begin{corollary}[{\cite[Corollary~6.2]{wCSR}}]\label{c:csr-indep}
$C_\subcrit (S_\subcrit)^t R_\subcrit $ depends only on the set of s.c.c.'s of~$\crit(A)$
intersecting ~$\subcrit$ -- a completely reducible subgraph of~$\crit(A)$.
\end{corollary}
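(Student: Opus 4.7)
The plan is to reduce the claim to a walk-counting statement via the representation formula in Theorem~\ref{thm:representation}, and then exhibit an explicit transformation of walks showing that two completely reducible subgraphs $\subcrit,\subcrit'\subseteq\crit(A)$ intersecting the same set of s.c.c.'s of~$\crit(A)$ produce the same product. If $\digr(A)$ is acyclic, Remark~\ref{rem:acyclic} zeroes out all CSR terms and the statement is vacuous, so assume $\crit(A)$ has at least one arc. By the scaling observation of Remark~\ref{rem:acyclic}, the matrices $C_\subcrit$ and $R_\subcrit$ are invariant under replacing $A$ by $A-\sr(A)$ while $S_\subcrit$ gets multiplied by $-\sr(A)$; the induced rescaling of $C_\subcrit(S_\subcrit)^t R_\subcrit$ is independent of $\subcrit$, so we may assume $\sr(A)=\one$. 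Under this normalization every arc of $\crit(A)$ sits on a zero-weight cycle, hence every closed walk of~$\crit(A)$ has weight~$\one$.

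Denote by $K_1,\dots,K_m$ the s.c.c.'s of~$\crit(A)$ intersecting $\subcrit$, which by hypothesis coincide with those intersecting $\subcrit'$. Choose $p$ to be a common multiple of $\cyclty(\subcrit)$ and $\cyclty(\subcrit')$, and select node sets $\mN,\mN'$ containing one representative per s.c.c.\ of $\subcrit$, respectively $\subcrit'$. An elementary divisibility check shows that $\cyclty(K_\ell)\mid p$ for every~$\ell$: any s.c.c.\ $D$ of $\subcrit$ is contained in some $K_\ell$, its cycles are cycles of the strongly connected $K_\ell$, hence $\cyclty(K_\ell)\mid\cyclty(D)$; taking the lcm over such $D$ gives $\cyclty(K_\ell)\mid\cyclty(\subcrit)\mid p$, and likewise for $\subcrit'$. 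Theorem~\ref{thm:representation} then delivers
\[
\big(C_{\subcrit}(S_{\subcrit})^t R_{\subcrit}\big)_{i,j}=\max\big\{\w(\gm)\ds| \gm\in\walkslennode{i}{j}{t,p}{\mN}\big\},
\]
together with its analogue for $\subcrit'$ and $\mN'$, reducing the claim to equality of these two maxima.

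The core step, and the main obstacle, is a splice-a-detour argument. Given any walk $\gm\in\walkslennode{i}{j}{t,p}{\mN}$ that visits a node $k_1\in\mN\cap K_\ell$, pick $k_2\in\mN'\cap K_\ell$; such $k_2$ exists because $K_\ell$ meets~$\subcrit'$. Because $K_\ell$ is strongly connected, there is a closed walk $\sigma$ at $k_1$ passing through $k_2$. By the standard B\'ezout-type property of strongly connected digraphs, $\sigma$ can be extended by cycles at~$k_2$ so that $|\sigma|$ becomes any sufficiently large multiple of $\cyclty(K_\ell)$, and in particular a multiple of~$p$. Since all arcs of $\sigma$ lie in $\crit(A)$, the walk $\sigma$ has weight~$\one$; splicing $\sigma$ into $\gm$ at its visit to $k_1$ yields $\gm'\in\walkslennode{i}{j}{t,p}{\mN'}$ with $\w(\gm')=\w(\gm)$ and length still $\equiv t\pmod p$. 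Hence the max over~$\mN'$ dominates the max over~$\mN$, and symmetry closes the argument. The whole difficulty is concentrated in constructing a detour of length divisible by~$p$, which the divisibility facts above guarantee; the weight and length-mod-$p$ bookkeeping is then automatic.
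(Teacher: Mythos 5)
The paper itself gives no proof here; it cites \cite[Corollary~6.2]{wCSR}. Your proof is correct and proceeds exactly as one would expect a corollary of Theorem~\ref{thm:representation} (= Theorem~6.1 of \cite{wCSR}) to be derived: normalize to $\sr(A)=\one$, use the representation formula with a common modulus~$p$ divisible by both cyclicities, and splice a zero-weight critical detour of length divisible by~$p$ into any witnessing walk so that it also visits the other node set. The only ingredient you invoke somewhat implicitly is the standard fact that every cycle contained in $\crit(A)$ is itself critical (so closed walks on $\crit(A)$ carry weight~$\one$ after normalization); this is a classical max-plus lemma and fine to cite, but it is the load-bearing fact your argument rests on and deserves an explicit mention.
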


\begin{corollary}[{\cite[Corollary~6.3]{wCSR}}]\label{c:CSRscc}
If $\subcrit_1,\dots,\subcrit_q$ are the s.c.c.'s of~$\subcrit$,
then
\begin{equation}\label{e:early-exp}
C_\subcrit (S_\subcrit)^t R_\subcrit =\Add_{\xi=1}^q C_{\subcrit_\xi} (S_{\subcrit_\xi})^t R_{\subcrit_\xi}.
\end{equation}
\end{corollary}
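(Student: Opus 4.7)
The plan is to derive Corollary \ref{c:CSRscc} directly from the walk representation furnished by Theorem~\ref{thm:representation}, applied once to $\subcrit$ on the left-hand side and once to each component $\subcrit_\xi$ on the right-hand side, with a \emph{common} modulus $p$.

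First, I would dispose of the acyclic case: if $\digr(A)$ is acyclic, then by Remark~\ref{rem:acyclic} all CSR terms (for $\subcrit$ and for each $\subcrit_\xi$) are zero matrices by convention, so both sides of \eqref{e:early-exp} vanish. In the sequel, assume $\sr(A)\ne\zero$; up to multiplying $A$ by $-\sr(A)$ (which by the last sentence of Remark~\ref{rem:acyclic} rescales each $S_{\subcrit_\xi}$ and $S_\subcrit$ consistently), we may assume $\sr(A)=\one$, so that Theorem~\ref{thm:representation} is applicable.

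The key observation is a compatible choice of parameters. Set $p=\cyclty(\subcrit)$; since by definition $\cyclty(\subcrit)=\lcm(\cyclty(\subcrit_1),\dots,\cyclty(\subcrit_q))$, this $p$ is simultaneously a multiple of each $\cyclty(\subcrit_\xi)$. For each $\xi$, pick a representative node $k_\xi\in \subcrit_\xi$ and set $\mN_\xi:=\{k_\xi\}$ and $\mN:=\bigcup_\xi \mN_\xi=\{k_1,\dots,k_q\}$. Then $\mN$ meets each s.c.c.\ of $\subcrit$, and $\mN_\xi$ meets the unique s.c.c.\ of $\subcrit_\xi$, so Theorem~\ref{thm:representation} applies on both sides with these choices, giving
\[
 \bigl(C_\subcrit (S_\subcrit)^t R_\subcrit\bigr)_{i,j}=\max\bigl\{w(\gm)\,|\,\gm\in\walkslennode{i}{j}{t,p}{\mN}\bigr\},\qquad
 \bigl(C_{\subcrit_\xi}(S_{\subcrit_\xi})^t R_{\subcrit_\xi}\bigr)_{i,j}=\max\bigl\{w(\gm)\,|\,\gm\in\walkslennode{i}{j}{t,p}{\mN_\xi}\bigr\}.
\]

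Finally, I would combine these via the evident set identity
\[
\walkslennode{i}{j}{t,p}{\mN}=\bigcup_{\xi=1}^{q}\walkslennode{i}{j}{t,p}{\mN_\xi},
\]
which follows at once from $\mN=\bigcup_\xi \mN_\xi$ and the definition $\walkslennode{i}{j}{t,p}{\mN}=\bigcup_{k\in\mN}\walkslennode{i}{j}{t,p}{k}$. Taking the maximum over the union distributes over the pieces, and since $\add=\max$ in $\Trop$, this gives, entrywise,
\[
\bigl(C_\subcrit (S_\subcrit)^t R_\subcrit\bigr)_{i,j}=\Add_{\xi=1}^{q}\bigl(C_{\subcrit_\xi}(S_{\subcrit_\xi})^t R_{\subcrit_\xi}\bigr)_{i,j},
\]
which is \eqref{e:early-exp}.

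There is no real obstacle here: the only subtle point is confirming that a single modulus $p=\cyclty(\subcrit)$ is admissible in Theorem~\ref{thm:representation} for every $\subcrit_\xi$ simultaneously, which is immediate from the $\lcm$ definition of cyclicity of a completely reducible digraph. Everything else is bookkeeping about walks passing through $\mN$ versus through $\mN_\xi$.
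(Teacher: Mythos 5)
Your proof is correct, and it is essentially the expected derivation of this corollary from Theorem~\ref{thm:representation} (the paper itself gives no in-text proof and simply cites~\cite{wCSR}). The three load-bearing observations are all present and sound: a single modulus $p=\cyclty(\subcrit)$ works simultaneously for $\subcrit$ and every $\subcrit_\xi$ because $\cyclty(\subcrit)=\lcm_\xi\cyclty(\subcrit_\xi)$; each $\subcrit_\xi$, being strongly connected, is vacuously a completely reducible subgraph of $\crit(A)$, so the representation theorem applies to it; and $\walkslennode{i}{j}{t,p}{\mN}=\bigcup_\xi\walkslennode{i}{j}{t,p}{\mN_\xi}$ is immediate from the definition of walks through a node set, after which $\max$ distributes over the union. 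Your handling of the two preliminary reductions (the acyclic convention of Remark~\ref{rem:acyclic}, and normalization to $\sr(A)=\one$ with the observation that both sides of \eqref{e:early-exp} shift by the same $t\sr(A)$) closes the small gaps one could otherwise worry about. One could shorten slightly by invoking Corollary~\ref{c:csr-indep} to reduce the choice of $\mN$ to a canonical one, but that is cosmetic; your direct walk-set decomposition is cleaner.
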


\begin{definition}\label{def:TcrEp}
Let~$\subcrit$ be a subgraph of~$\digr(A)$, and let $p\in\N$.
The \hlt{cycle removal threshold}~$\thrs^p(\subcrit)$
(resp.  \hlt{strict cycle removal
threshold~$\tthrs^p (\subcrit)$}) of~$\subcrit$ is
the smallest $T \in \N \cup \{ 0 \} $ for which the following
holds: for each walk~$\gm \in\walksnode{i}{j}{\subcrit}$ of
length~$\geq T$ there is a walk
$\dl \in\walksnode{i}{j}{\subcrit}$ obtained from~$\gm$ by removing
cycles (resp.\ at least one cycle), and possibly  inserting cycles
from~$\subcrit$, such that $\wlen(\dl)\le T$ and $\wlen(\dl) =  \wlen(\gm) \mod{p}$.

\end{definition}

\begin{proposition}[{\cite[Proposition 9.5]{wCSR}}]\label{p:TcRLin}
Given a subgraph $\subcrit$ of~$\digr(A)$ with~$m$ nodes, then
$$ \thrs^p(\subcrit)\le p n +n-m -1, \quad \text{for any } p \in \N.$$
\end{proposition}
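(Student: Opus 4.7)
The plan is to take any walk $\gm\in\walksnode{i}{j}{\subcrit}$ with $\wlen(\gm)\ge pn+n-m$ and transform it, via the operations allowed in Definition~\ref{def:TcrEp}, into a walk $\dl\in\walksnode{i}{j}{\subcrit}$ of length at most $pn+n-m-1$ with $\wlen(\dl)\equiv\wlen(\gm)\pmod{p}$.

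The first step is to decompose $\gm=\alpha\circ\beta$ at the first position $s$ at which $\gm$ visits a node of $\subcrit$; call that node $k$, so $\alpha$ is a walk $i\to k$ and $\beta$ a walk $k\to j$. Since the interior of $\alpha$ lies entirely outside $\subcrit$, the walk $\alpha$ uses at most $n-m+1$ distinct nodes (the $n-m$ nodes off $\subcrit$ together with $k$), whereas $\beta$ may traverse any of the $n$ nodes of $\digr(A)$.

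Next I would reduce the two parts separately. For $\beta$, apply pigeonhole to the augmented pairs $(v_t,\,t\bmod p)$, valued in a set of size $np$: any two positions with the same pair identify a cycle whose length is divisible by $p$, and whose removal preserves both $\wlen(\beta)\bmod p$ and the initial $\subcrit$-visit at $k$ (since $k$ sits at position $0$ of $\beta$). Iterating until no repeat remains yields $\beta'$ with $\wlen(\beta')\le pn-1$. For $\alpha$, remove cycles freely (without a $\bmod p$ constraint) to collapse it to a simple walk $\alpha'$ from $i$ to $k$ of length at most $n-m$; denote by $L$ the total length of cycles removed from $\alpha$.

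The concatenation $\alpha'\circ\beta'$ already has length at most $(n-m)+(pn-1)=pn+n-m-1$, which matches the desired bound, but the free reduction of $\alpha$ has shifted the total length modulo $p$ by $L$. Correcting this requires inserting a cycle of $\subcrit$ based at $k$ whose length is congruent to $L$ modulo $p$, and, when that insertion adds extra length, compensating by additional removals from $\beta$ of length divisible by $p$, exploiting the slack coming from $\wlen(\gm)\ge pn+n-m$. The delicate modulo-$p$ bookkeeping—coordinating which cycles are removed from $\alpha$, which $\subcrit$-cycle (governed by $\cyclty(\subcrit)$) is inserted at $k$, and which further cycles are stripped from $\beta$ to simultaneously meet the length bound and the congruence—is the main obstacle and the technical heart of the argument in~\cite{wCSR}.
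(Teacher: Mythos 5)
The paper cites this result from \cite[Proposition~9.5]{wCSR} without reproducing a proof, so there is no internal argument to compare against; I will evaluate the attempt on its own terms.

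Your opening moves are sound: splitting $\gm=\alpha\circ\beta$ at the first $\subcrit$-visit $k$, bounding the suffix by pigeonholing on pairs $(v_t,\,t\bmod p)$ to get $\wlen(\beta')\le pn-1$ while preserving the residue, and bounding the prefix by $n-m$ using that its interior avoids $\subcrit$. These give the numerology $pn+n-m-1$. The gap is in the last step. You collapse $\alpha$ freely to a simple walk, shifting the total length by $L\bmod p$, and then propose to repair the residue by inserting a cycle of $\subcrit$ at $k$ of length $\equiv L\pmod p$. Such a cycle need not exist: $\subcrit$ is merely a subgraph of $\digr(A)$ and may have no cycles at all (indeed Corollary~\ref{c:TcRLin2} invokes this proposition with $\subcrit=\{i\}$ a single arc-less node), and even when $\subcrit$ has cycles, their lengths at $k$ only realize residues in a coset of $\cyclty(\subcrit)\Z/p\Z$, which need not contain $L\bmod p$. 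There is no lemma available to you that guarantees the required cycle, so the repair step does not close.

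A concrete instance where your recipe, followed literally, produces a wrong-parity walk: take $n=4$, $\subcrit=\{4\}$ (no arcs), $m=1$, $p=2$, so the bound is $10$, and consider the walk
$$\gm:\ 1\to2\to3\to1\to4\to1\to2\to1\to2\to1\to2\to1,$$
of odd length $11$ through node $4$. Here $\alpha=1\to2\to3\to1\to4$ has length $4>n-m=3$ and its only removable cycle has length $3$, so collapsing $\alpha$ yields $\alpha'=1\to4$ with $L=3$ odd; the suffix $\beta$ has length $7<pn$ so your pigeonhole leaves it intact; the concatenation has even length $8$, and there is no cycle of $\subcrit$ to insert. Yet the proposition is true here: keeping $\alpha$ whole and deleting the length-$2$ cycle $1\to2\to1$ at the tail of $\beta$ gives a walk of odd length $9\le10$ through node $4$. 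This shows the flaw is not only the unavailable insertion but also the rigid decision to force $\wlen(\alpha')\le n-m$; the constraint is only on the \emph{sum} $\wlen(\alpha')+\wlen(\beta')$, and the correct argument must trade length between the two pieces (reduce $\beta$ below $pn-1$ when $\alpha$ cannot be shortened by a multiple of $p$) rather than collapse $\alpha$ maximally and patch afterwards. As it stands, the modular bookkeeping you flag as ``the technical heart'' is a genuine unresolved gap, not a routine verification.
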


\begin{corollary}\label{c:TcRLin}
For a simple cycle $\cyc$ of~$\digr(A)$ with~$\len{\cyc}\le n-1$ the following holds:
$$\thrs^{\len{\cyc}}(\cyc)\le \len{\cyc} n +n-\len{\cyc}-1=\len{\cyc}(n-2)+n+ \len{\cyc}-1 \le \Wi+(\len{\cyc}-1).
$$
\end{corollary}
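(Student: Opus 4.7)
The plan is to view Corollary~\ref{c:TcRLin} as a straightforward specialization of Proposition~\ref{p:TcRLin}, followed by a short algebraic verification. The key observation is that for a simple cycle~$\cyc$, the number of distinct nodes equals its length~$\len{\cyc}$, so the parameter~$m$ appearing in Proposition~\ref{p:TcRLin} can be taken equal to~$\len{\cyc}$ (since a simple cycle visits each of its nodes exactly once before returning). This means I can apply the proposition with $\subcrit = \cyc$, $p = \len{\cyc}$, and $m = \len{\cyc}$.

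First I would substitute these values into the bound $\thrs^p(\subcrit) \le pn + n - m - 1$ of Proposition~\ref{p:TcRLin} to obtain $\thrs^{\len{\cyc}}(\cyc) \le \len{\cyc}\, n + n - \len{\cyc} - 1$, which gives the first inequality of the corollary. The middle equality is a purely formal rearrangement:
\[
\len{\cyc}\, n + n - \len{\cyc} - 1 = \len{\cyc}(n-2) + n + \len{\cyc} - 1,
\]
which is checked by expanding $\len{\cyc}(n-2) = \len{\cyc}\, n - 2\len{\cyc}$ on the right-hand side.

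For the final inequality, I would need to show that $\len{\cyc}(n-2) + n + \len{\cyc} - 1 \le (n-1)^2 + \len{\cyc}$, i.e., $\len{\cyc}(n-2) \le (n-1)(n-2)$, which holds precisely under the hypothesis $\len{\cyc} \le n-1$ (the case $n=1$ being vacuous since then no non-loop simple cycle exists). This is the only place where the assumption on $\len{\cyc}$ enters. There is no real obstacle here; the only thing to be careful about is correctly counting nodes of a simple cycle and recalling the definition $\Wi = (n-1)^2 + 1$ used in the statement. Hence the entire proof reduces to one application of Proposition~\ref{p:TcRLin} together with elementary arithmetic.
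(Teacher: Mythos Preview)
Your proposal is correct and matches the paper's intended approach: the corollary is stated without proof immediately after Proposition~\ref{p:TcRLin}, so it is meant as the direct specialization you describe (with $H=\cyc$, $p=m=\len{\cyc}$) followed by the elementary arithmetic you carried out.
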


\begin{corollary}\label{c:TcRLin2}
For a node $i$ of  $\digr(A)$  and~$p \le n$ the following holds: 
$$\tthrs^{p}(\{ i \})\le p n +n-2+1\le n^2 +(p -1).$$
\end{corollary}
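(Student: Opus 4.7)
The plan is to deduce Corollary~\ref{c:TcRLin2} directly from Proposition~\ref{p:TcRLin} applied to the one-node subgraph $\subcrit = \{i\}$ (so $m = 1$), which yields
$$\thrs^p(\{i\}) \le pn + n - 2.$$
To promote this non-strict bound to the strict bound on $\tthrs^p$, I would rely on a simple length-accounting observation: if a walk $\delta$ is obtained from another walk $\gm$ by removing cycles and inserting cycles from $\subcrit$, then $\wlen(\delta) = \wlen(\gm) - L_{\mathrm{rm}} + L_{\mathrm{in}}$, where $L_{\mathrm{rm}}$ and $L_{\mathrm{in}}$ are the total lengths of the removed and inserted cycles. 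In particular, if no cycle is removed, then $\wlen(\delta) \ge \wlen(\gm)$.

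Set $T := pn + n - 1$. For any walk $\gm \in \walksnode{i}{j}{\{i\}}$ with $\wlen(\gm) \ge T$, Proposition~\ref{p:TcRLin} produces a walk $\delta \in \walksnode{i}{j}{\{i\}}$ obtained from $\gm$ by removing and possibly inserting cycles, such that
$$\wlen(\delta) \le pn + n - 2 < T \le \wlen(\gm), \qquad \wlen(\delta) \equiv \wlen(\gm) \pmod{p}.$$
The strict inequality $\wlen(\delta) < \wlen(\gm)$, combined with the observation above, forces $L_{\mathrm{rm}} > 0$, so at least one cycle has been removed. This verifies the defining property of $\tthrs^p(\{i\})$ at the threshold $T$, hence $\tthrs^p(\{i\}) \le pn + n - 1 = pn + n - 2 + 1$.

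For the second inequality, I only need to check that $pn + n - 1 \le n^2 + (p - 1)$, which rearranges to $p(n-1) \le n(n-1)$, and this holds precisely because $p \le n$. I do not expect any real obstacle in this argument: it is a direct specialization of Proposition~\ref{p:TcRLin} to $m=1$, together with the elementary remark that a net length decrease cannot be achieved by insertions alone.
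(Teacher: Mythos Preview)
Your proposal is correct and follows exactly the approach the paper intends: the corollary is stated without proof as an immediate consequence of Proposition~\ref{p:TcRLin} with $m=1$, and your length-accounting remark is precisely the reason for the extra ``$+1$'' when passing from $\thrs^p$ to the strict threshold $\tthrs^p$.
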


The next proposition allows to deal with Hamiltonian cycles.
\begin{proposition}[{\cite[Proposition 9.4]{wCSR}}]\label{p:TcrHAWielandt}
For  a simple cycle $\cyc$ of length~$n$ in~$\digr(A)$ the following holds:
$$\tthrs^{n}(\cyc)\le n^2-n+1=\Wi+(\len{\cyc}-1).$$
\end{proposition}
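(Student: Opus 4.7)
The plan is to show that any walk $\gm$ on $\digr(A)$ from $i$ to $j$ of length $L \ge n^2 - n + 1$ can be transformed, by removing at least one closed subwalk and possibly inserting copies of $\cyc$, into a walk $\dl$ of length at most $n^2 - n + 1$ with $\wlen(\dl) \equiv L \pmod n$; iterating this reduction whenever the current length exceeds $n^2 - n + 1$ will yield the bound on $\tthrs^n(\cyc)$. Two observations simplify matters: since $\cyc$ is Hamiltonian, every walk on $\digr(A)$ meets $\cyc$, so the membership $\gm \in \walksnode{i}{j}{\cyc}$ is automatic; and each insertion of $\cyc$ adds exactly $n$ to the length and hence preserves the residue modulo $n$, so only the \emph{removed} subwalks must contribute a total length divisible by $n$.

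The central tool is a pigeonhole on the positions visited by a single node. Since $L + 1 \ge n^2 - n + 2 > n(n-1)$, some node $v$ is visited at least $n$ times in $\gm$. If $v$ is visited at least $n+1$ times, at positions $p_0 < p_1 < \cdots < p_k$ with $k \ge n$, then among the $k+1 \ge n+1$ differences $p_j - p_0$ taken modulo $n$ there must be a repetition, yielding indices $a < b$ with $p_b - p_a$ a positive multiple of $n$. The corresponding closed subwalk at $v$ has length divisible by $n$, so removing it produces $\dl$ strictly shorter than $\gm$ by a positive multiple of $n$ and with the same residue modulo $n$, with no insertion of $\cyc$ needed.

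The main obstacle is the residual case where every node is visited at most $n$ times; then $L + 1 \le n^2$, so $L$ lies in the narrow window $\{n^2 - n + 1, \dots, n^2 - 1\}$, and for a node visited exactly $n$ times the partial sums may exhaust all of $\Z / n\Z$, leaving no within-node collision. Here one exploits the Hamiltonicity of $\cyc$: inserting a single copy of $\cyc$ at a well-chosen occurrence of some node produces a walk $\gm'$ of length $L+n$ in which some node is visited $n+1$ times, so the single-node pigeonhole now applies to $\gm'$ and furnishes a closed subwalk of length divisible by $n$. The delicate point is to ensure that the cycle this furnishes does not coincide with the inserted copy of $\cyc$ itself---a length-$n$ cycle whose removal would merely undo the insertion---but rather has length at least $2n$, so that the net effect of the insertion plus removal is a strict decrease of at least $n$. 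The combinatorial flexibility needed to guarantee such a non-trivial collision comes from the Hamiltonian ordering of the nodes on $\cyc$ together with the freedom in choosing both the node and the specific occurrence at which to insert $\cyc$; exploiting these so that the new occurrence created by the inserted $\cyc$ falls far from the existing occurrence of matching residue is the crux of the argument. Iterating the overall reduction then concludes the proof.
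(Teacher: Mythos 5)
The paper merely cites this proposition from the reference \cite{wCSR} and gives no proof of its own, so your attempt has to be judged on its own merits.

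Your first two steps are sound: Hamiltonicity makes the membership condition $\gm\in\walksnode{i}{j}{\cyc}$ vacuous, and if $L\ge n^2-n+1$ then $L+1> n(n-1)$, so some node $v$ is visited at least $n$ times; and if $v$ is visited at least $n+1$ times, then by pigeonhole on the residues modulo~$n$ of its visiting positions some closed subwalk at $v$ has length a positive multiple of~$n$ and can be removed.

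The gap is the residual case, and it is a real one, not merely a loose end. When $L$ lies in the window $[n^2-n+1,\,n^2-1]$ there genuinely exist walks in which every node is visited at most $n$ times with all $n$ residues realised at each node (equivalently, all $n^2$ pairs $(\text{node},\,\text{position}\bmod n)$ are pairwise distinct), so the single-node pigeonhole yields nothing. Your proposed remedy --- insert a copy of $\cyc$ and then look for a collision whose removal is a net win --- is only a sketch, and the mechanism does not obviously exist: if you insert $\cyc$ at an occurrence of some node $v$, the only new same-residue collision at $v$ is precisely between the two endpoints of the inserted copy, a length-$n$ cycle whose removal undoes the insertion; and for a node $u\ne v$, the inserted copy contributes one new occurrence of $u$ at a position whose residue is $(q+s_u)\bmod n$ where $q$ is the insertion position and $s_u$ is $u$'s $\cyc$-distance from $v$, so whether this clashes with an existing occurrence of $u$, and whether the resulting closed subwalk has length $\ge 2n$, depends delicately on the walk. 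You flag this as ``the crux'' but never supply the argument, so the proof as written does not establish the bound.

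A more promising line, still within your framework, is to stop reasoning position-by-position and instead decompose $\gm$ into a simple path $P$ together with a sequence of simple cycles $c_1,\dots,c_K$ removed one after another. Since $P$ visits each of the $n$ nodes at most once, $\len P\le n-1$, hence $\sum_i\len{c_i}\ge L-(n-1)\ge (n-1)^2+1$; as each $\len{c_i}\le n$, either some $c_i$ has length exactly $n$ (remove it), or all have length $\le n-1$ and therefore $K\ge n$, in which case one argues via partial sums modulo $n$ to extract a block of removals whose total length is a positive multiple of $n$. This isolates where the Hamiltonian hypothesis enters (through the simple-path bound $\len P\le n-1$ and through $\delta$ automatically meeting $\cyc$), but it still requires care to check that the block of simple cycles identified by the partial-sum pigeonhole can actually be removed from $\gm$ in the sense of Definition~\ref{def:TcrEp}, possibly with some $\cyc$-insertions to compensate; that check, like the residual case in your write-up, is the part that has to be done, not waved at.
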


\begin{remark}\label{rem:TcrUse}
The above bounds on~$\thrs^p$ are applied to produce  from a given walk~$\gm_1\in\walkslennode{i}{j}{t}{\subcrit}$ a~ new walk~$\gm_2\in\walkslennode{i}{j}{t,p}{\subcrit}$ with $\len{\gm_2}\le T$,
and the bound $\thrs^{p}(\subcrit)\le T$, by omitting cycles from $\gm_1$ and possibly inserting cycles of~$H$.
\end{remark}

The next lemma (included implicitly in~\cite{wCSR}) completes Theorem~\ref{thm:representation} for matrices $A \in \Mn$ that are not normalized, i.e.,  have $\sr(A) \neq \one$.

\begin{lemma}\label{l:NonNormCSR}Given~$A\in\Mn$, $t\in\N$,  and indices~$i,j = 1,\dots, n$.
\begin{enumerate} \eroman
   \item $\left(C_\subcrit (S_\subcrit)^t R_\subcrit\right)_{i,j} \ge \w(\gm_{i,j})$ for any completely reducible  subgraph~$\subcrit$ of~$\crit(A)$ and any walk  $\gm_{i,j}\in~\walkslennode{i}{j}{t}{\subcrit}$;

 \item Assume $\cyc$ is a simple critical cycle. If~$t \ge \Wi$, then there exists $\gm_{i,j}\in~\walkslennode{i}{j}{t}{\cyc}$ such that
 $\left(C_\cyc (S_\cyc)^t R_\cyc\right)_{i,j} =\w(\gm_{i,j})$.
\end{enumerate}
\end{lemma}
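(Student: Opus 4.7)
The plan is to reduce both parts to the normalized case $\sr(A)=\one$, where Theorem~\ref{thm:representation} applies directly. Setting $A':=-\sr(A)+A$ (an entrywise shift; the degenerate case $\sr(A)=\zero$ is vacuous thanks to Remark~\ref{rem:acyclic}) normalizes to $\sr(A')=\one$ without altering the underlying digraph, while each walk $\gm$ satisfies $\w'(\gm)=\w(\gm)-\len{\gm}\sr(A)$. By the last part of Remark~\ref{rem:acyclic}, $C_\subcrit$ and $R_\subcrit$ are unchanged under the shift, whereas $S_\subcrit$ is shifted entrywise by $-\sr(A)$, so
\[
(C_\subcrit (S_\subcrit)^t R_\subcrit)_{i,j} \;=\; t\sr(A) \,+\, (C_\subcrit (S_{\subcrit}')^t R_\subcrit)_{i,j},
\]
and all bookkeeping between $A$ and $A'$ reduces to the shift $+t\sr(A)$.

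For part~(i), I would apply Theorem~\ref{thm:representation} to $A'$ with $p=\cyclty(\subcrit)$ and $\mN$ taken to be the node set of~$\subcrit$; this gives $(C_\subcrit(S_{\subcrit}')^tR_\subcrit)_{i,j}=\max\{\w'(\gm):\gm\in\walkslennode{i}{j}{t,p}{\subcrit}\}$. Any $\gm_{i,j}\in\walkslennode{i}{j}{t}{\subcrit}$ has length exactly $t$ (hence $\equiv t\pmod p$) and so lies in that set, so $\w'(\gm_{i,j})$ is dominated by the right-hand side, and translating back by $+t\sr(A)$ produces $(C_\subcrit(S_\subcrit)^tR_\subcrit)_{i,j}\ge\w(\gm_{i,j})$.

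For part~(ii), I specialize to $\subcrit=\cyc$, $p=\cyclty(\cyc)=\len{\cyc}$, taking $\mN$ to be the node set of~$\cyc$, and pick $\gm^*$ achieving the maximum in~\eqref{e:representation}. Because $\sr(A')=\one$, every cycle of $\digr(A')$ has nonpositive weight and $\cyc$ itself has weight~$\one$; hence removing cycles from $\gm^*$ can only increase $\w'(\gm^*)$ and inserting copies of $\cyc$ leaves it unchanged. Applying the cycle removal bound $\thrs^{\len{\cyc}}(\cyc)\le\Wi+\len{\cyc}-1$ (from Corollary~\ref{c:TcRLin} when $\len{\cyc}\le n-1$, and from Proposition~\ref{p:TcrHAWielandt} combined with $\thrs\le\tthrs$ when $\len{\cyc}=n$), I may assume that $\gm^*$ still attains the maximum, still lies in $\walkslennode{i}{j}{t,\len{\cyc}}{\cyc}$, and now has length at most $\Wi+\len{\cyc}-1$.

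The decisive step, and the only place the hypothesis $t\ge\Wi$ is used, is the observation that $\len{\gm^*}\le t$: were $\len{\gm^*}>t$, the congruence $\len{\gm^*}\equiv t\pmod{\len{\cyc}}$ would force $\len{\gm^*}\ge t+\len{\cyc}\ge\Wi+\len{\cyc}$, contradicting the bound above. Once $\len{\gm^*}\le t$ is secured, I would fix a node $k\in\cyc$ appearing on $\gm^*$ (guaranteed since $\gm^*$ passes through $\cyc$) and insert $(t-\len{\gm^*})/\len{\cyc}$ copies of $\cyc$ at~$k$; since $\cyc$ is critical this leaves $\w'$ unchanged, so the resulting walk lies in $\walkslennode{i}{j}{t}{\cyc}$ and, after translating back through the normalization, has weight equal to $(C_\cyc(S_\cyc)^tR_\cyc)_{i,j}$. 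The only real obstacle is calibrating the hypothesis $t\ge\Wi$ against the cycle-removal slack $\len{\cyc}-1$, which is exactly what the congruence argument above accomplishes.
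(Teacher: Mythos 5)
Your proof is correct and follows essentially the same route as the paper: normalize by subtracting $\sr(A)$ so that Theorem~\ref{thm:representation} applies, obtain part~(i) directly, and for part~(ii) take an optimal walk, apply the cycle-removal bound ($\thrs^{\len{\cyc}}(\cyc)\le\Wi+\len{\cyc}-1$ via Corollary~\ref{c:TcRLin} or Proposition~\ref{p:TcrHAWielandt}), use the congruence to conclude the reduced walk has length at most~$t$, and pad with copies of the critical cycle~$\cyc$. The only cosmetic difference is that you spell out the normalization bookkeeping and the inequality $\thrs\le\tthrs$ explicitly, whereas the paper leaves them implicit.
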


\begin{proof}
 Both in~$(i)$ and~$(ii)$, since
 $\left(C_\subcrit (S_\subcrit)^t R_\subcrit\right)_{i,j}$ and $\w(\gm_{i,j})$  decreases by~$t\sr(A)$ when
 replacing $A$ by~$(-\sr(A))+  A$,  we may assume  that~$\sr(A)=\one$.
\pSkip
$(i)$: Follows immediately from Theorem~\ref{thm:representation}.
\pSkip
$(ii)$:
 Take $\gm_1\in \walkslennode{i}{j}{t,g}{\mN}$ having  weight~$\left(C_\cyc (S_\cyc)^t R_\cyc\right)_{i,j}$, cf. Theorem~\ref{thm:representation}.
 If $\len{\cyc}=n$, apply Proposition~\ref{p:TcrHAWielandt}, otherwise use Corollary~\ref{c:TcRLin},
 to get a walk~$\gm_2$ such that  $\len{\gm_2} = t \mod{\len{\cyc}}$
 and $\len{\gm_2}\le t+\len{\cyc}-1$, so $\len{\gm_2}\le t$.
 If needed, insert additional copies of~$\cyc$ to get a walk~$\gm_3$ of length exactly~$t$.
 Since~$\sr(A)=\one$, $\w(\gm_2)\ge\w(\gm_1)$.
 Since $\cyc$ is critical, $\w(\gm_3)=\w(\gm_2)$.
 Thus, $\w(\gm_3)\ge\w(\gm_1)=\left(C_\cyc (S_\cyc)^tR_\cyc\right)_{i,j}$.
  The reverse inequality is given  by~$(i)$,
 so that $\gm_{i,j}=\gm_3$ has the desired properties.
\end{proof}

We are now ready to prove the main result of this section.
\begin{theorem}\label{thm:frk} $\frk(A^{t})\le \trrk(A)$ for  any $t\ge (n-1)^2+1$.
\end{theorem}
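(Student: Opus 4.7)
The plan is to induct on the matrix size $n$, with the weak CSR expansion (Theorem~\ref{thm:wCSR}) as the central decomposition of $A^t$. For $n=1$, both $\frk(A^t)$ and $\trrk(A)$ lie in $\{0,1\}$ and coincide. For the inductive step, fix $A \in \Mn$ with $t \ge (n-1)^2+1$. The claim is trivial when $\trrk(A) = n$ (since $\frk(A^t) \le n$ always) and when $\digr(A)$ is acyclic (as then $A^t = \zero$). Otherwise $\crit(A)$ is nonempty, so Theorem~\ref{thm:wCSR} applies with $\subcrit = \crit(A)$, giving $A^t = C_\subcrit(S_\subcrit)^t R_\subcrit \oplus (B[A])^t$, and by subadditivity~\eqref{eq:subadd}
\[
\frk(A^t) \le \frk\bigl(C_\subcrit(S_\subcrit)^t R_\subcrit\bigr) + \frk\bigl((B[A])^t\bigr).
\]

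For the second summand, set $m = |V(\crit(A))| \ge 1$; since $B[A]$ has $\zero$-rows and $\zero$-columns at every critical node, it reduces to an $(n-m)\times(n-m)$ matrix on non-critical nodes, and $t \ge (n-m-1)^2+1$, so the induction hypothesis gives $\frk((B[A])^t) \le \trrk(B[A])$. For the first summand, Corollary~\ref{c:CSRscc} splits it as $\bigoplus_\xi C_{H_\xi}(S_{H_\xi})^t R_{H_\xi}$ over the s.c.c.'s $H_1,\dots,H_q$ of $\crit(A)$. For each $\xi$, apply Theorem~\ref{thm:representation} with $p = \cyclty(H_\xi)$ and $\mN = \{k_\xi\}$ a single representative: splitting each contributing walk at its first visit to $k_\xi$ and grouping by the residue class $r$ of the first-passage length modulo $\cyclty(H_\xi)$ expresses $(C_{H_\xi}(S_{H_\xi})^t R_{H_\xi})_{i,j}$ as a maximum of $\cyclty(H_\xi)$ rank-one terms $F_r^\xi(i) + G_{t-r}^\xi(j)$, so $\frk(C_{H_\xi}(S_{H_\xi})^t R_{H_\xi}) \le \cyclty(H_\xi)$. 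Summing yields $\frk(C_\subcrit(S_\subcrit)^t R_\subcrit) \le \sum_\xi \cyclty(H_\xi)$.

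Combining, $\frk(A^t) \le \sum_\xi \cyclty(H_\xi) + \trrk(B[A])$, so it remains to establish the combinatorial inequality $\sum_\xi \cyclty(H_\xi) + \trrk(B[A]) \le \trrk(A)$. This is the main obstacle. The natural strategy is to exhibit an explicit nonsingular submatrix of $A$ of the claimed size by combining a nonsingular submatrix of $B[A]$ on a set $I$ of non-critical nodes (witnessing $\trrk(B[A])$) with, for each $H_\xi$, a chosen set $V_\xi$ of $\cyclty(H_\xi)$ critical nodes carrying a canonical permutation built from critical arcs. Verifying that the union permutation is the unique maximum-weight permutation of the combined principal submatrix invokes Lemma~\ref{l:cycToPerm}, using the key fact that $\mu_i = \sr(A)$ for critical $i \in V_\xi$ while $\mu_i < \sr(A)$ strictly for $i \in I$ (since non-critical nodes lie on no critical cycle). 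The delicate subcase is simple cycles of the submatrix that mix nodes of $I$ and $\bigcup_\xi V_\xi$: individually such cycles may carry weight exceeding $\sum_{i \in \cyc} \mu_i$, so they must be ruled out by a more global argument showing they cannot appear in any maximum-weight cycle cover of the submatrix. This global compatibility step is expected to be the technical crux of the argument.
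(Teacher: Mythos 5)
Your plan has a genuine gap, and it is not just a hard step to fill in: the combinatorial inequality $\sum_\xi \cyclty(H_\xi) + \trrk(B[A]) \le \trrk(A)$ that you reduce to is actually \emph{false}. Consider
$$A = \begin{pmatrix} 0 & -0.1 & 0.5 \\ -0.5 & 0 & 0.5 \\ -1 & -1 & -1 \end{pmatrix}.$$
Here $\sr(A)=0$ is achieved only by the loops at nodes $1$ and $2$ (every other cycle has strictly negative mean weight), so $\crit(A)$ consists of two singleton s.c.c.'s, each of cyclicity $1$, giving $\sum_\xi \cyclty(H_\xi)=2$. The matrix $B[A]$ retains only the entry $A_{3,3}=-1$, so $\trrk(B[A])=1$. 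Your target would require $2+1\le\trrk(A)$, i.e.\ $\trrk(A)=3$. But the permanent $-0.5$ is attained by both transpositions $(1\,3)$ and $(2\,3)$, so $A$ is singular and $\trrk(A)=2$. Hence your bound $\frk(A^t)\le 3$ is strictly weaker than the theorem's $\frk(A^t)\le 2$, and no construction of a nonsingular $3\times 3$ submatrix can exist.

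The root cause is that you bound the two summands of the weak CSR expansion \emph{independently}: $\frk(C_H(S_H)^tR_H)$ by $\sum_\xi\cyclty(H_\xi)$ and $\frk((B[A])^t)$ by induction. But in examples like the one above, $(B[A])^t$ is \emph{dominated} entrywise by the CSR part once $t$ is large (walks on $B[A]$ lose to walks detouring through a critical node), so its contribution is redundant and should not be counted at all. The paper's proof works around precisely this: it expands $A^t$ into a whole list of CSR terms indexed by node-disjoint simple cycles, then takes a \emph{minimal} subcollection $\Phi$ of cycles whose CSR terms already reproduce $A^t$, and uses Lemma~\ref{l:cycToPerm} together with the cycle-removal bounds (Proposition~\ref{p:TcRLin}, Corollary~\ref{c:TcRLin2}, etc.) to show that whenever $\sum_{\cyc\in\SCycles}\len{\cyc}>\trrk(A)$, one CSR term in $\Phi$ is dominated by another (inequality~\eqref{e:CSRmCSRl}), contradicting minimality. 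Your induction-on-$n$ scheme has no analogous mechanism for discarding redundant terms; to repair it you would need to compare the $B[A]$ contribution with the critical CSR contributions before adding their ranks, which is essentially the pruning argument of the paper. As a separate remark, your rank-one decomposition of $C_{H_\xi}(S_{H_\xi})^tR_{H_\xi}$ by residue classes of the first-passage time to $k_\xi$ modulo $\cyclty(H_\xi)$ is sound and in fact gives a sharper per-component bound ($\cyclty(H_\xi)$ rather than the minimal cycle length), but this improvement does not compensate for the missing domination step.
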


\begin{proof}
Fix $t\ge\Wi$, and apply  Theorem~\ref{thm:wCSR} recursively to get $A^t$ as the sum of $C_{\subcrit_\xi}(S_{\subcrit_\xi})^t R_{\subcrit_\xi}$ defined by successive matrices subordinate to~$A$.
Explicitly, we start with $A_1=A$ and define inductively  $A_{\xi+1}=B[A_\xi]$.
At each step, we set $C_\xi = C_{\subcrit_\xi}$, $S_\xi = S_{\subcrit_\xi}$, $R_\xi= R_{\subcrit_\xi}$
to be the $CSR$ terms of~$A_\xi$ with respect to~$\subcrit_\xi = \crit(A_\xi)$.

By definition~\eqref{e:CSRschemes} of $B[A_\xi]$ we get a sequence of nested digraphs
\begin{equation}\label{eq:dig.seq} \digr(A) =
 \digr(A_1) \supseteq \digr(A_2) \supseteq \digr(A_3) \supseteq \cdots,
\end{equation}
such that for any~$\xi>\varsigma$, $\digr(A_{\xi})$ is the subgraph of~$\digr(A_\varsigma)$ obtained
by removing all the arcs of $\digr(A_\varsigma)$
that are incident to some node that is critical for some~$A_\zeta$, where  $\xi>\zeta\ge\varsigma$.
Thus, $\digr(A_\xi)$  can be viewed as a digraph on a subset of nodes of $\digr(A)$, i.e., as an induced subgraph.

Since $\digr(A_1)$ has finitely many nodes and $\crit(A_\xi)$ and $\crit(A_\varsigma)$ are arc-disjoint  for any $\xi\neq\varsigma$, the sequence
\eqref{eq:dig.seq} stabilizes after finitely many steps, when~$\digr(A_\xi)$ is acyclic.
Therefore, \eqref{eq:dig.seq} restricts to matrices  $A_1, \dots, A_q$ with  strict inclusions, where $A_q^t=\zero$, since~$\digr(A_q)$ is acyclic.
Applying Theorem~ \ref{thm:wCSR} recursively, we obtain
$$ A^t=\Add_{\xi=1}^{q-1} C_{\xi}(S_{\xi})^tR_{\xi}.$$

If~$\xi<q$, then $\subcrit_\xi$ is not acyclic; hence $\subcrit_\xi$ is completely reducible.
Let~$\Cycles_\xi$ be a collection of simple cycles of~$\subcrit_\xi$ that contains one cycle from each~s.c.c. of~$\subcrit_\xi$, each of them having  minimal length.
By Corollaries~~\ref{c:csr-indep} and \ref{c:CSRscc}, we have
$$C_{\xi}(S_{\xi})^t R_{\xi}=\Add_{\cyc\in\Cycles_\xi}C_{\cyc}(S_{\cyc})^t R_{\cyc},$$
where $C_{\cyc}$, $S_{\cyc}$, $R_{\cyc}$ are CSR terms of $A_\xi$ with respect to~$\cyc \in \Cycles _\xi$.
Namely, the collection $\Cycles=\bigcup_\xi\Cycles_\xi$ of node-disjoint simple cycles gives
\begin{equation}\label{e:CSRExp}
 A^t=\Add_{\cyc\in\Cycles} C_{\cyc}(S_{\cyc})^t R_{\cyc},
\end{equation}
where $C_{\cyc},S_{\cyc},R_{\cyc}$ are the CSR terms of the unique~$A_\xi$ such that~$\cyc$ is a simple cycle
of~$\crit(A_\xi)$.
The factor rank is subadditive, cf.~\eqref{eq:subadd},  and thus \eqref{e:CSRExp} implies
\begin{equation} \label{e:frk&Cycles}
 \frk(A^t)\le \sum_{\cyc\in\Cycles}\frk\left(C_{\cyc}(S_{\cyc})^t R_{\cyc}\right)
\le \sum_{\cyc\in\Cycles} \len{\cyc}.
\end{equation}
(Later we show that some terms can be omitted to get
$\sum_{\cyc\in\Cycles} \len{\cyc}\le\trrk(A)$.)

Let $\SCycles$ be a subcollection of~$\Cycles$ for which ~\eqref{e:CSRExp} holds as well. \hlt{Assume that}
\begin{equation}\label{eq:assume}
\sum_{\cyc\in\SCycles} \len{\cyc}>\trrk(A).
\end{equation}
Denote by~$\tN$ the set of  nodes of all~$\cyc\in\SCycles$, and by~$Q$ the principal minor of~$A$ indexed by the nodes in~$\tN$.
Since the $\cyc\in\SCycles$ are node-disjoint simple cycles, $\tN$ has strictly more than~$\trrk(A)$ elements; hence $Q$ is singular.
By Lemma~\ref{l:cycToPerm}, applied to the permutation of $\tN$ whose cycles are the $\cyc\in\SCycles$ (cf. Remark \ref{per.digr}),
$\digr(Q)$ has a simple cycle~$\cyc\notin\SCycles$  such that
\begin{equation}\label{e:ExtraCycle}
 \w(\cyc)\ge \sum_{i \in\cyc } \sum_{\subcrit_\xi \ni i }\sr(A_\xi).
\end{equation}
(The right part runs over all nodes $i \in \cyc$ and for each $i$ accumulates the spectral radius $\sr(A_\xi)$ for the unique $\subcrit_\xi$ containing $i$.)

Let~$l$ and $m$ be respectively the smallest and  largest~$\xi$ such that~$\cyc\cap\subcrit_\xi\neq\emptyset$.
Note that~$\cyc$ is a simple cycle belonging~$\digr(A_l)$, since all its node occur in $\subcrit_\xi$ for some~$\xi\ge l$.
Assume first that $l =m $. Then, $\w(\cyc)\ge\len{\cyc}\sr(A_l)$ by~\eqref{e:ExtraCycle},  implying that  $\cyc$ is a simple cycle of~$\subcrit_l$.
Since each simple cycle in~$\SCycles\cap\Cycles_l$ belongs to a different $s.c.c.$ of~$\subcrit_l$,
all nodes of~$\cyc$ appear in the same simple cycle~$\tilde{\cyc}$ in~$\SCycles \cap \Cycles_l$.
Since~$\cyc\notin\SCycles$, there is an arc of~$\cyc$ that does not belong to~$\tilde{\cyc}$.
Starting with this arc and going back  along the arcs of~$\tilde{\cyc}$, we build a
cycle of~$\subcrit_l$ shorter than~$\tilde{\cyc}$.
This contradicts the minimality of the length of~$\tilde{\cyc}$.

We are left with the case where~$l < m$.
Let~$\cyc_\xi$, with $\xi = l,m$,  be a simple cycle in~$\crit(A_\xi)$ that belongs to~$\SCycles$
such that  $\cyc\cap\cyc_\xi \neq\emptyset$, and let~$k_\xi$ be a node of this  nonempty intersection.
%
It remains to show that
\begin{equation}\label{e:CSRmCSRl}
C_{\cyc_m}(S_{\cyc_m})^t R_{\cyc_m}\le C_{\cyc_l}(S_{\cyc_l})^t R_{\cyc_l}.
\end{equation}
Fix indices~$i$ and~$j$, for which $\left(C_{\cyc_m}(S_{\cyc_m})^t R_{\cyc_m}\right)_{i,j}\neq\zero$.
By Lemma~\ref{l:NonNormCSR}, there is a walk~$\gm_1\in\walksnode{i}{j}{\cyc_m}$ on~$\digr(A_m)$ such that
$\w(\gm_1)=\left(C_{\cyc_m}(S_{\cyc_m})^tR_{\cyc_m}\right)_{i,j}$.
In particular, $\gm_1$ intersects $\cyc_m$, and $\cyc_m$ intersects $\cyc$ at some~$k_m$.
 Insert $\cyc_m$ into
$\gm_1$ to get a walk~$\gm_2\in \walkslennode{i}{j}{t+\len{\cyc_m}}{k_m}$. Note that $\gm_2$ lives on~$\digr(A_m)$,
so it visits at most~$n-1$ different nodes, since all arcs incident to~$\subcrit_l$ do not belong to~$\digr(A_m)$.

By Corollary~\ref{c:TcRLin2}, applied to~$p=\len{\cyc}$ and~$i=k_m$, there is another walk~$\gm_3\in\walkslennode{i}{j}{t,\len{\cyc_m}}{k_m}$ on~$\digr(A_m)$,
of length at most~$(n-1)^2+\len{\cyc_m}-1$ (cf. Remark~\ref{rem:TcrUse}). But  $\len{\gm_3} = t\mod{\len{\cyc_m}}$, and thus $\len{\gm_3} \leq (n-1)^2<t$.
Inserting copies of~$\cyc_m$ into~$\gm_3$ at $k_m$ , we get a walk $\gm_4\in\walkslennode{i}{j}{t}{k_m}$. Namely,
$\gm_4$ is obtained from~$\gm_1$ by adding a copy of $\cyc$ and copies of~$\cyc_m$,
and removing cycles having  average weight at  most~$\sr(A_m)$,
which is the average weight of~$\cyc_m$. Therefore~$\w(\gm_4)\ge\w(\gm_1)=\left(C_{\cyc_m}(S_{\cyc_m})^tR_{\cyc_m}\right)_{i,j}$.

Now we  reduce~$\gm_4$ (cf. Remark~\ref{rem:TcrUse}), and then insert copies of~$\cyc$ at~$k_m$ to produce a new walk~$\gm_6\in \walkslennode{i}{~j}{t}{\cyc_l}$
for which
\begin{equation}\label{e:walkscomp}
 \w(\gm_6)\ge\w(\gm_4)\ge \left(C_{\cyc_m}(S_{\cyc_m})^t R_{\cyc_m}\right)_{i,j}.
\end{equation}
Note that $\gm_4$ is a walk on~$\digr(A_m)$, so it visits $\tilde{n}\le n-1$ different nodes,
at most~$n-\len{\cyc}$ of which do not belong to~$\cyc$, as $\gm_4$ is also a walk on~$\digr(A)$.
\begin{itemize}[leftmargin=.2in] \dispace
  \item[--]
 When $\tilde{n}<n-1$, we apply Proposition~\ref{p:TcRLin} with~$p=\len{\cyc}$ to get a walk~$\gm_5\in \walkslennode{i}{j}{t,\len{\cyc}}{\cyc}$
of length at most~$(\tilde{n}-1)\len{\cyc}+n-1\le (n-1)^2<t$. Then, we insert at least one copy of~$\cyc$ to get a walk~$\gm_6$ of length~$t$.
Since the average weight of~$\cyc$ is larger than~$\sr(A_m)$ by~\eqref{e:ExtraCycle}, and thus larger than the average weight of each cycle of~$\gm_4$, inequality
\eqref{e:walkscomp}~holds.

\item[--] The equality $\tilde{n}=n-1$ implies that $\cyc_l$ is a loop at~$k_l$. In this case, we apply Proposition~\ref{p:TcRLin} with~$p=1$ and get
$\gm_5\in \walksnode{i}{j}{k_m}$ of length at most~$n-1+n-1=2n-2$. Then we insert~$\cyc$ once and enough copies of the loop~$\cyc_l$ to get a walk~$\gm_6$ of length~$t$.
Since $\cyc_l$ is a critical cycle for~$A_l$, its average weight is~$ \sr(A_l)$, while $\sr(A_l)\ge \sr(A_m)$; thus \eqref{e:walkscomp}~holds.
\end{itemize}
Finally~\eqref{e:CSRmCSRl} follows from~\eqref{e:walkscomp} by Lemma~\ref{l:NonNormCSR}~$(i)$,
and we have proved that the sum~$\sum_{\cyc\in\SCycles} \len{\cyc}$ is not minimal, as long as
this sum is strictly larger than~$\trrk(A)$.
Thus, the inequality~$\frk(A^t)\le\trrk(A)$ follows from~\eqref{e:frk&Cycles},
applied to a minimal subcollection~$\SCycles$ that satisfies~\eqref{e:CSRExp}.
\end{proof}

\section{Semigroup identities of tropical matrices}\label{sec:4}
The following auxiliary  results  lead to Theorems~\ref{thm:Induction} and ~\ref{thm:IdExistence}.
We begin with an idea of Y.~Shitov~\cite{Shitov}, implemented in the following lemma.
\begin{lemma}\label{l:Shitov}
Let $A,B,C\in\Mn$ such that~$A=PQ$, where $P \in \Mat_{n\times k}$, $Q\in \Mat_{k \times n}$, $k < n$, and let  $w \in\{a,b\}^+$. Then
$(wa)\eval{AB}{AC}=P\big(w\eval{QBP}{QCP}\big)Q$.
\end{lemma}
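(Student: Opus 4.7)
This is a direct matrix identity whose proof is a single associativity computation that threads the factorization $A = PQ$ through the word-substitution on the left.

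Concretely, I would write $w = x_1 x_2 \cdots x_m$ with $x_i \in \{a,b\}$ and set $M_i = B$ if $x_i = a$, $M_i = C$ if $x_i = b$, so that $(x_i)\eval{AB}{AC} = A M_i$. The evaluation $(wa)\eval{AB}{AC}$ then expands as the matrix product
$$A M_1 \cdot A M_2 \cdot \, \cdots \, \cdot A M_m \cdot A,$$
in which $A$ appears $m+1$ times, interleaved with $M_1, M_2, \ldots, M_m$.

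Substituting $A = PQ$ at every occurrence and reassociating via matrix associativity collects the expression into
$$P \cdot (Q M_1 P)(Q M_2 P) \cdots (Q M_m P) \cdot Q,$$
with a single $P$ on the far left, a single $Q$ on the far right, and $m$ inner blocks of size $k\times k$. Since each block $Q M_i P$ is $QBP$ when $x_i = a$ and $QCP$ when $x_i = b$, the inner product is exactly $w\eval{QBP}{QCP}$, and one obtains $P\bigl(w\eval{QBP}{QCP}\bigr)Q$, as required.

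No serious obstacle arises; the lemma is structural. Its content is that the factorization $A = PQ$ through a smaller inner dimension $k$ propagates through any such alternating matrix word, pushing the "width-$n$" operations into the outer frames $P$ and $Q$ and leaving a $k\times k$ matrix word in the middle. The hypothesis $k<n$ plays no role in the algebra itself, but it is exactly what will power Shitov's inductive scheme: identities known to hold for $k\times k$ tropical matrices can be lifted to identities in $\MnT$ satisfied by matrices whose factor rank is less than $n$.
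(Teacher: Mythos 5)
Your one-shot expansion is essentially the paper's ``induction on the length of $w$'' unrolled; the re-association is the whole content and you have it right. However, there is a bookkeeping slip in your expansion of the left-hand side, and it happens to coincide with a defect in the lemma's statement itself.

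Under the paper's definition of the evaluation, every occurrence of $a$ in the word $wa$, including the last one, is replaced by $AB$. Hence $(wa)\eval{AB}{AC}$ has $m+1$ factors, the last being $AB$:
\begin{equation*}
(wa)\eval{AB}{AC}=(A M_1)(A M_2)\cdots(A M_m)(AB)=A M_1 A M_2 \cdots A M_m A\,B .
\end{equation*}
Your expansion drops the trailing $B$, ending in $A$ rather than in $AB$. With that factor in place the equation claimed by the lemma is false as stated: already $w=a$ gives $(aa)\eval{AB}{AC}=ABAB$, whereas $P\bigl(a\eval{QBP}{QCP}\bigr)Q=P(QBP)Q=ABA$, and these differ by a final $B$. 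What your regrouping actually establishes is the (correct) identity
\begin{equation*}
w\eval{AB}{AC}\cdot A=P\bigl(w\eval{QBP}{QCP}\bigr)Q,
\end{equation*}
and this is exactly what the paper uses: the only application, in the proof of Theorem~\ref{thm:Induction}, specializes to $A=X$, $B=I$, $C=R$, so that $a\mapsto X$ and $b\mapsto Y=XR$; with $B=I$ the literal statement and the corrected one coincide. So your technique is sound and matches the paper's (the induction is just the regrouping done one factor at a time), but the expansion as written does not follow from the stated notation, and the lemma should really be read with a closing factor $A$ appended outside the substitution rather than as a final letter $a$ evaluated to~$AB$.
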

\begin{proof}
Straightforward by induction on the length of the word ~$w$.
\end{proof}

To deal with matrices that cannot be factorized as above, we use  Theorem~\ref{thm:FullRank} which extends a result from~\cite{mxID}.
To this  ends additional results are needed, based on the following  conditions:
A pair of matrices $A,B\in \Mn$ and a word   $w \in \{a,b\}^+$ satisfy~\eqref{PR} if:
\begin{equation}\label{PR}
  \begin{array}{ll}
    \per(A)=\mtr(A), \   \ \per(B)=\mtr(B), \quad \text{and }
    \trrk(w\eval{A}{B})= n.
  \end{array} \tag{PR}
\end{equation}

\begin{lemma}\label{l:ukkAB}
 Assume that \eqref{PR} holds for  $A,B\in \Mn$,  $w \in \{a,b\}^+$, and write $w = w_1 w_2 \cdots w_{\wlen(w)} $ as a sequence of letters.
  For each index~$i= 1,\dots,n$ we have
 \begin{equation}\label{eq:ukkAB}
  \big(w\eval{A}{B}\big)_{i,i} = \sum_{t=1}^{\wlen(w)} \big(w_t\eval{A}{B}\big)_{i,i}=\lno{a}{w} A_{i,i}+\lno{b}{w} B_{i,i},
  \end{equation}
i.e., the $i$'th diagonal entry of $w\eval{A}{B}$ is $\lno{a}{w} A_{i,i}+\lno{b}{w} B_{i,i}$.

 \end{lemma}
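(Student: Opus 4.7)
The plan is to exploit condition \eqref{PR} to force the identity permutation to be the unique maximizer of the permanent of every intermediate product, and then combine a global equality with a pointwise inequality to pin down the diagonal entries of $w\eval{A}{B}$.

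First, since $\trrk(w\eval{A}{B})=n$ the matrix $w\eval{A}{B}$ is nonsingular, so by iterating Theorem~\ref{pr:perAB} each partial product is nonsingular, every factor $w_t\eval{A}{B}$ is nonsingular, and
\[
\per\bigl(w\eval{A}{B}\bigr) \;=\; \sum_{t=1}^{\wlen(w)}\per\bigl(w_t\eval{A}{B}\bigr)
\;=\; \lno{a}{w}\per(A)+\lno{b}{w}\per(B),
\]
with composed optimal permutation $\tau_{w\eval{A}{B}}=\tau_{w_{\wlen(w)}\eval{A}{B}}\circ\cdots\circ\tau_{w_1\eval{A}{B}}$.

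Next, because $A$ is nonsingular with $\per(A)=\mtr(A)$, the identity permutation already reaches the permanent; uniqueness of the maximizer in Definition~\ref{def:matOper}~(ii) then forces $\tau_A=\idn$, and likewise $\tau_B=\idn$. Consequently $\tau_{w\eval{A}{B}}=\idn$ as well, so
\[
\mtr\bigl(w\eval{A}{B}\bigr)\;=\;\per\bigl(w\eval{A}{B}\bigr)\;=\;\lno{a}{w}\mtr(A)+\lno{b}{w}\mtr(B)\;=\;\sum_{i=1}^n\bigl(\lno{a}{w}A_{i,i}+\lno{b}{w}B_{i,i}\bigr).
\]
Note that nonsingularity combined with $\per=\mtr$ rules out any diagonal entry equal to $\zero$, so every $A_{i,i}$ and $B_{i,i}$ is finite.

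Finally, for each fixed $i$, I would apply Proposition~\ref{pr:WalkInterpret} to the walk on $\Auto(A,B)$ from $i$ to $i$ that reads $w$ by using only the loop at vertex $i$ at every step; its weight is exactly $\lno{a}{w}A_{i,i}+\lno{b}{w}B_{i,i}$, so
\[
(w\eval{A}{B})_{i,i}\;\ge\;\lno{a}{w}A_{i,i}+\lno{b}{w}B_{i,i}.
\]
Summing these pointwise inequalities over $i$ yields the quantity computed above, hence each inequality must be an equality. Rewriting $\lno{a}{w}A_{i,i}+\lno{b}{w}B_{i,i}$ as $\sum_{t=1}^{\wlen(w)}(w_t\eval{A}{B})_{i,i}$ (since $w_t\in\{a,b\}$) gives \eqref{eq:ukkAB}. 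The only subtle step is the passage from the global trace identity to the entrywise statement, but it is handled cleanly by the pointwise lower bound from the loop-walk; no heavy machinery beyond Theorem~\ref{pr:perAB} and Proposition~\ref{pr:WalkInterpret} is required.
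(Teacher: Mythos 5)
Your proof is correct and follows essentially the same strategy as the paper: establish $\mtr\bigl(w\eval{A}{B}\bigr) = \lno{a}{w}\mtr(A)+\lno{b}{w}\mtr(B)$ via Theorem~\ref{pr:perAB} together with the hypothesis $\per=\mtr$, then combine with the pointwise loop-walk lower bound from Proposition~\ref{pr:WalkInterpret} and sum over $i$ to force equality in every coordinate. The only cosmetic difference is that you identify the optimal permutations explicitly ($\tau_A=\tau_B=\idn$, hence $\tau_{w\eval{A}{B}}=\idn$ by the composition formula), whereas the paper reaches the same trace identity without naming permutations, via the chain $\per\bigl(w\eval{A}{B}\bigr)=\sum_t\mtr\bigl(w_t\eval{A}{B}\bigr)\le\mtr\bigl(w\eval{A}{B}\bigr)\le\per\bigl(w\eval{A}{B}\bigr)$.
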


\begin{proof} Applying Theorem \ref{pr:perAB}, as \eqref{PR} holds, we have
$$
\begin{array}{ll}
\per\big(w\eval{A}{B} \big) =  \displaystyle{\sum_{t=1}^{\wlen(w)} \per\big(w_t\eval{A}{B}\big) }=
   \displaystyle{\sum_{t=1}^{\wlen(w)}  \mtr\big(w_t\eval{A}{B}\big)} \leq  \mtr\big(w\eval{A}{B} \big) ,
\end{array}
$$
implying the equality
\begin{equation}\label{eq:trwAB}
          \mtr\big(w\eval{A}{B}\big) =  {\sum_{t=1}^{\wlen(w)}  \mtr(w_t\eval{A}{B})},
         \end{equation}
 since   $\per\big(w\eval{A}{B} \big) \geq \mtr\big(w\eval{A}{B} \big)$.
Proposition~\ref{pr:WalkInterpret} obviously implies
\begin{equation}\label{eq:inukkAB}
 \big(w\eval{A}{B}\big)_{i,i} \ge \sum_{t=1}^{\wlen(w)} \big(w_t\eval{A}{B}\big)_{i,i}=\lno{a}{w} A_{i,i}+\lno{b}{w} B_{i,i}\; ,
\end{equation}
since the right hand side corresponds to the weight of the walk from~$\vx{i}$ to itself,  composed of loops only.
On the other hand, since~$w\eval{A}{B}$ is nonsingular, we have
$$\begin{array}{ll}
\displaystyle{\sum_{i=1}^n \big(w\eval{A}{B}\big)_{i,i}}
&=\mtr(w\eval{A}{B})
\ \overset{\eqref{eq:trwAB}}{=}  \ \displaystyle{\sum_{t=1}^{\wlen(w)}  \mtr(w_t\eval{A}{B})}\\
&=\lno{a}{w}  \mtr(A) +\lno{b}{w}  \mtr(B)
=\displaystyle{\sum_{i=1}^n \big(\lno{a}{w}  A_{i,i}+\lno{b}{w}  B_{i,i} \big)},
\end{array}$$
so that the inequality in~\eqref{eq:inukkAB} cannot be strict for any~$i$.  Hence, \eqref{eq:ukkAB} holds.
\end{proof}

\begin{lemma}\label{l:uABcyclFree}
Assume that  \eqref{PR} holds  for  $A,B\in \Mn$, $w \in \{a,b\}^+$.
For each entry $W_{i,j}\neq \zero$ of \mbox{$W = w\eval{A}{B}$} there is a 1-cyclic walk~$\gm_{i,j}$ on $\Auto(A,B)$, labeled by~$w$ of  weight $W_{i,j}$.

\end{lemma}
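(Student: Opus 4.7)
My plan is to start with any walk $\gm$ from $\vx{i}$ to $\vx{j}$ on $\Auto(A,B)$ labeled by $w$ of maximal weight $W_{i,j}$ (such a walk exists by Proposition~\ref{pr:WalkInterpret}), and to iteratively rewrite it into a $1$-cyclic walk, without decreasing its weight. Since the weight is already maximal, each rewriting must preserve it.

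The key ingredient is that condition~\eqref{PR} is inherited by every contiguous subword of~$w$. Indeed, writing $w = w_1\cdots w_m$ and any factorization $w = uw'v$, Theorem~\ref{pr:perAB} applied to $w\eval{A}{B} = u\eval{A}{B}\cdot w'\eval{A}{B}\cdot v\eval{A}{B}$ forces each factor to be nonsingular, because the product is. Combined with the unchanged assumptions $\per(A)=\mtr(A)$ and $\per(B)=\mtr(B)$, this means~\eqref{PR} holds for the subword~$w'$ as well. Hence Lemma~\ref{l:ukkAB} applies to every such~$w'$.

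Now, suppose $\gm$ is not $1$-cyclic, viewed as the node sequence $\vx{i}=\vx{v_0},\vx{v_1},\dots,\vx{v_m}=\vx{j}$. Then some node $\vx{h}$ is visited at positions $k_1<k_2$ with $k_2-k_1\ge 2$, and (choosing $k_1,k_2$ so the intermediate visit sequence is as simple as possible) the subwalk $\gm'$ from position $k_1$ to $k_2$ is a walk $\vx{h}\to\vx{h}$ labeled by $w' := w_{k_1+1}\cdots w_{k_2}$ that contains a simple cycle of length~$\ge 2$. By Proposition~\ref{pr:WalkInterpret}, its weight is at most $\bigl(w'\eval{A}{B}\bigr)_{h,h}$. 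Since~\eqref{PR} holds for $w'$, Lemma~\ref{l:ukkAB} gives
\[
\bigl(w'\eval{A}{B}\bigr)_{h,h} \;=\; \lno{a}{w'} A_{h,h}+\lno{b}{w'} B_{h,h},
\]
which is exactly the weight of the walk at $\vx{h}$ consisting only of the loops prescribed by the letters of $w'$. Hence replacing $\gm'$ inside $\gm$ by this loop-only walk yields a new walk $\gm''$ from $\vx{i}$ to $\vx{j}$, still labeled by $w$, whose weight is at least that of $\gm$; by maximality it equals $W_{i,j}$.

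Each such replacement strictly decreases the number of non-loop arcs in the walk, so the procedure terminates after finitely many steps with a $1$-cyclic walk of weight $W_{i,j}$, proving the lemma. The only delicate point is selecting, at each step, a subwalk $\gm'$ whose endpoints coincide and that actually contains a non-loop simple cycle; this is automatic because any non-$1$-cyclic walk contains some simple cycle of length~$\ge 2$, whose first traversal gives such a pair $(k_1,k_2)$. I do not foresee any serious obstacle beyond this bookkeeping.
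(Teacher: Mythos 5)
Your proposal is correct and follows essentially the same strategy as the paper: start from a maximal-weight walk realizing $W_{i,j}$, observe that \eqref{PR} propagates to any contiguous factor of $w$ via Theorem~\ref{pr:perAB}, and then repeatedly replace a returning subwalk at a node $h$ by the loop-only subwalk at $h$, which Lemma~\ref{l:ukkAB} shows does not lower the weight. The only cosmetic difference is that the paper excises, for a recurrent node $h$, the whole subwalk from its first to its last occurrence (so $h$ is handled in one shot), whereas you excise one simple cycle of length at least $2$ at a time and terminate by the decreasing count of non-loop arcs; both variants are sound.
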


\begin{proof}
An $(i,j)$-entry of~$w\eval{A}{B}$ corresponds to the weight of a walk~$\gm_0:= \gm_{i,j}$ from $\vx{i}$ to $\vx{j}$ on $\Auto(A,B)$ labeled by $w$,
by Proposition~\ref{pr:WalkInterpret}.  Assume $\gm_0$ is not 1-cyclic, which means that $\gm_0$ returns to a node~$\vx{h}$ which it has already left. Let $\gm_1$ be the subwalk~$\gm_1$ of~$\gm_0$ which starts  at the first occurrence $\vx{h}$ and ends at  the last occurrence of~$\vx{h}$.
Let~$v$ be the factor of~$w$ labeling~$\gm_1$. Since~$v\eval{A}{B}$ is a factor of~$w\eval{A}{B}$, it follows from Proposition~\ref{pr:perAB} that
$\trrk(v\eval{A}{B})=n$, and~$A,B,v$ satisfy~\eqref{PR}. Hence, by Lemma~\ref{l:ukkAB} the walk $\gm_2$ labeled by~$v$ that stays at $\vx{h}$ has weight at least as that of $\gm_1$.
Then $\gm_1$  can be replaced by $\gm_2$ in~$\gm_0$ to obtain a walk $\gm_3$ that does not return to $\vx{h}$ after leaving $\vx{h}$ and whose weight is at least as that of~$\gm_0$.
Repeating  this process sequentially  for each recurrent node, we receive  a 1-cyclic walk~$\gm$ with weight at least as that of~$\gm_0$.
Since $\gm$ cannot have a strictly larger weight, we are done.
\end{proof}

We are now ready to prove:
\begin{theorem}\label{thm:FullRank}  Suppose that $\sid{u}{v}\in\Id(\Un)$, with $u,v\in \{a,b\}^+$, and that
 $A,B\in \Mn$ satisfy
 \begin{equation}\label{eq:PerTr}
    \per(A)=\mtr(A)\ \text{ and } \ \per(B)=\mtr(B);
\end{equation}
\begin{equation}\label{eq:rkU=rkV}
    \trrk(u\eval{A}{B})=\trrk(v\eval{A}{B})=n.
\end{equation}
Then, $u\eval{A}{B}=v\eval{A}{B}$. 
\end{theorem}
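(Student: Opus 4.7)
The plan is to verify $(u\eval{A}{B})_{i,j}=(v\eval{A}{B})_{i,j}$ entrywise, treating diagonal and off-diagonal positions by different arguments. Both $u$ and $v$ satisfy the conditions~\eqref{PR} by \eqref{eq:PerTr} and~\eqref{eq:rkU=rkV}, so Lemmas~\ref{l:ukkAB} and~\ref{l:uABcyclFree} are available for each.

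For diagonal positions, Lemma~\ref{l:ukkAB} directly gives $(w\eval{A}{B})_{i,i}=\lno{a}{w}A_{i,i}+\lno{b}{w}B_{i,i}$ for $w\in\{u,v\}$, so the claim reduces to $\lno{a}{u}=\lno{a}{v}$ and $\lno{b}{u}=\lno{b}{v}$. I would extract these equalities from $\sid{u}{v}\in\Id(\Un)$ by substituting the scalar upper triangular matrices $\alpha I,\beta I\in\Un$ for $a,b$: both sides of $\sid{u}{v}$ then become $(\lno{a}{\cdot}\alpha+\lno{b}{\cdot}\beta)I$, forcing the letter-counts to agree.

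For off-diagonal positions, fix $i\neq j$ and invoke Lemma~\ref{l:uABcyclFree} to obtain a 1-cyclic walk $\gm_u$ on $\Auto(A,B)$ from $\vx{i}$ to $\vx{j}$ labeled by $u$ whose weight realizes $(u\eval{A}{B})_{i,j}$. Let $i=h_0,h_1,\dots,h_k=j$ enumerate its distinct visited nodes, and choose any $\sigma\in\Sn$ with $\sigma(h_0)<\sigma(h_1)<\dots<\sigma(h_k)$. I would then build matrices $\tilde{A},\tilde{B}\in\Un$ whose only non-$\zero$ entries are the loops $\tilde{A}_{\sigma(h_l),\sigma(h_l)}=A_{h_l,h_l}$ and $\tilde{B}_{\sigma(h_l),\sigma(h_l)}=B_{h_l,h_l}$ for $l=0,\dots,k$, together with the transitions from $\sigma(h_l)$ to $\sigma(h_{l+1})$ placed in $\tilde{A}$ or $\tilde{B}$ according to whether the matching arc of $\gm_u$ is labeled $a$ or $b$. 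These matrices are upper triangular by construction, and the sparsity of $\Auto(\tilde{A},\tilde{B})$ forces every walk from $\sigma(h_0)$ to $\sigma(h_k)$ to traverse the chain $\sigma(h_0)\to\sigma(h_1)\to\dots\to\sigma(h_k)$ with only loops interspersed. Thus $\gm_u$, re-expressed on $\Auto(\tilde{A},\tilde{B})$, achieves the maximum, giving $(u\eval{\tilde{A}}{\tilde{B}})_{\sigma(h_0),\sigma(h_k)}=(u\eval{A}{B})_{i,j}$.

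Applying $\sid{u}{v}\in\Id(\Un)$ to $(\tilde{A},\tilde{B})$ yields $u\eval{\tilde{A}}{\tilde{B}}=v\eval{\tilde{A}}{\tilde{B}}$. Since $\tilde{A}$ and $\tilde{B}$ are entrywise dominated by $P_\sigma A P_\sigma^{-1}$ and $P_\sigma B P_\sigma^{-1}$, the same domination passes to their tropical products, so
\[
(u\eval{A}{B})_{i,j}=(u\eval{\tilde{A}}{\tilde{B}})_{\sigma(h_0),\sigma(h_k)}=(v\eval{\tilde{A}}{\tilde{B}})_{\sigma(h_0),\sigma(h_k)}\le (v\eval{A}{B})_{i,j}.
\]
By symmetry, \eqref{eq:rkU=rkV} applies to $v$ and Lemma~\ref{l:uABcyclFree} supplies a 1-cyclic walk realizing $(v\eval{A}{B})_{i,j}$ to which the same construction applies, yielding the reverse inequality and hence equality. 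The principal obstacle is the design of the triangular pair $(\tilde{A},\tilde{B})$: one must order the visited nodes via $\sigma$ so that $\tilde{A},\tilde{B}$ genuinely lie in $\Un$, and sparsify aggressively enough to prevent any spurious walk on $\Auto(\tilde{A},\tilde{B})$ from overshooting the weight of $\gm_u$, while still preserving $\gm_u$'s own weight.
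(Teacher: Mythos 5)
Your proof is correct and follows essentially the same route as the paper: obtain a $1$-cyclic optimal walk via Lemma~\ref{l:uABcyclFree}, permute the nodes so the walk moves forward, transfer to a pair of upper triangular matrices, apply the triangular identity, then use entrywise domination to come back. The one genuine departure is your split into diagonal and off-diagonal cases; the paper avoids this by taking $T_A,T_B$ to be the full upper triangular truncations of $P^{-1}AP,P^{-1}BP$ rather than the aggressively sparsified $\tilde A,\tilde B$ you build from only the arcs of the walk. With the truncations, the argument is uniform in $i,j$ (including $i=j$, where the $1$-cyclic walk is a pure loop walk) and the worry about spurious walks overshooting disappears automatically: $T_A\le P^{-1}AP$ and $T_B\le P^{-1}BP$ already give the upper bound, while the re-expressed walk gives the lower bound, so $\bigl(u\eval{T_A}{T_B}\bigr)_{\pi(i),\pi(j)}=\bigl(u\eval{A}{B}\bigr)_{i,j}$ with no extra sparsification needed. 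Your diagonal-case detour via Lemma~\ref{l:ukkAB} and scalar substitutions in $\Un$ is sound but redundant once one notices this. Both arguments are valid; the paper's is tighter.
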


\begin{proof}
We  prove that the following inequality holds for any entry $(i,j)$:
\begin{equation}\label{eq:uLev}
\left(u\eval{A}{B}\right)_{i,j}\le \left(v\eval{A}{B}\right)_{i,j}.
\end{equation}
The case of $\left(u\eval{A}{B}\right)_{i,j}=\zero$ is obvious.
Otherwise, Lemma~\ref{l:uABcyclFree} gives a 1-cyclic walk~$\gm_{i,j}$, i.e., $\gm_{i,j}$  never returns to a node which it has already left. Thus, the nodes of $\Auto(A,B)$ can be permuted, say by ~$\pi \in S_n$,   in a way that~$\gm_{i,j}$  has only arcs that go forward.
Let  $P := P_\pi$ be the  matrix associated to $\pi$, and let $T_A$ and~$T_B$ be the upper triangular matrices  obtained respectively from $P^{-1}AP$ and~$P^{-1}BP$  by setting all entries below the diagonal to~$\zero$. Then the matrix $P^{-1}AP$ satisfies
$$
\begin{array}{cc}
\big(u\eval{P^{-1}AP}{P^{-1}BP}\big)_{\pi(i),\pi(j)}
= \big(u\eval{T_A}{T_B}\big)_{\pi(i),\pi(j)},\end{array}
$$
and we can compute
$$
\begin{array}{ll}
\left(u\eval{A}{B}\right)_{i,j}
&=\left(u\eval{P^{-1}AP}{P^{-1}BP}\right)_{\pi(i),\pi(j)} \\[2mm] &
= \big(u\eval{T_A}{T_B}\big)_{\pi(i),\pi(j)}
= \big(v\eval{T_A}{T_B}\big)_{\pi(i),\pi(j)}\\[2mm]
&\le\left(v\eval{P^{-1}AP}{P^{-1}BP}\right)_{\pi(i),\pi(j)}
= \left(v\eval{A}{B}\right)_{i,j}.
\end{array}
$$
Thus~\eqref{eq:uLev} holds for each entry ~$(i,j)$. The reverse inequality holds by symmetry, so that~$u\eval{A}{B}=v\eval{A}{B}$.
\end{proof}

To apply Theorem \ref{l:uABcyclFree}, matrices which satisfy \eqref{eq:PerTr} should be detected; this is done by Lemma~\ref{lem:permAn}.

\begin{remark}\label{rem:determinantalRank}
Lemma~\ref{lem:permAn}, and consequently Theorem~\ref{thm:FullRank},  also hold if the maximality condition of tropical rank is replaced by maximality of the so-called determinantal rank, which is larger.
As well, modifying the notion of nonsingularity accordingly, Theorem~\ref{pr:perAB}  holds, cf. \cite[Theorem~2]{Shitov}. Nevertheless,  Theorem~\ref{pr:TropToFactor} holds for tropical rank, which suffices our needs.
\end{remark}

We can finally prove our main result:
\begin{theorem}\label{thm:Induction}
Given $n\in\N$, let  $\lcmn=\lcm(1,\dots,n)$. For any $t\ge \Wi$ and every $\sid{u}{v}\in\Id(\Mat_{n-1})$, where $u,v, p,\hp, q,\hq, r, \hr \in \{ a,b\}^+$, the following hold:
\begin{enumerate}\eroman
  \item If~$\sid{q}{r}\in\Id(\UT_{n})$, then~
\begin{equation}\label{eq:uv1}
 \sid{ua}{va}\substit{\big((qr)^{t}\big)\substit{a^{\lcmn}}{b^{\lcmn}}}{\big((qr)^{t}r\big)\substit{a^{\lcmn}}{b^{\lcmn}}} \in\Id(\Mat_{n});
\end{equation}

  \item If~$(p\hq p,p\hr p)\in\Id(\UT_{n})$, then
\begin{equation}\label{eq:uv2}
\sid{ua}{va}\substit{(w\hq p\big) \substit{a^{\lcmn}}{b^{\lcmn}}}{(w\hr p\big)\substit{a^{\lcmn}}{b^{\lcmn}}} \in\Id(\Mat_{n})
\end{equation}
with $w =(p\hq p \hr p)^{t}$.

\end{enumerate}

\end{theorem}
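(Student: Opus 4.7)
The plan is to fix arbitrary $A,B\in\Mat_n$, set $\tilde A = A^{\lcmn}$, $\tilde B = B^{\lcmn}$, and split on whether a certain ``inner block'' has full tropical rank. In~(i), with $Q = q\eval{\tilde A}{\tilde B}$ and $R = r\eval{\tilde A}{\tilde B}$, the substituted matrices are $M = (QR)^t$ and $N = MR$, and the inner block is $QR$. In~(ii), with $P,\hat Q,\hat R$ the evaluations of $p,\hat q,\hat r$ at $\tilde A,\tilde B$ and $\sigma = P\hat QP\hat RP$, the substituted matrices are $M = \sigma^t\hat QP$ and $N = \sigma^t\hat RP$, and the inner block is $\sigma$. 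The goal is to show $(ua)\eval{M}{N}=(va)\eval{M}{N}$ in either case.

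Case 1 (inner block of $\trrk<n$): Theorem~\ref{thm:frk} together with $t\ge(n-1)^2+1$ gives factor rank $<n$ for the $t$-th power of the inner block, which is $M$ itself in~(i) (with $N = M\cdot R$ and $M = M\cdot I_n$ exposing $M$ as a common left factor), and is the common left factor $W=\sigma^t$ in~(ii). Writing this common factor as $P_0Q_0$ with $P_0\in\Mat_{n\times k}$, $Q_0\in\Mat_{k\times n}$, $k<n$, Shitov's Lemma~\ref{l:Shitov} rewrites both $(ua)\eval{M}{N}$ and $(va)\eval{M}{N}$ in the form $P_0\bigl(w\eval{X}{Y}\bigr)Q_0$ with $w\in\{u,v\}$ and $X,Y\in\Mat_k$. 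Since $\Mat_k$ embeds as a subsemigroup of $\Mat_{n-1}$ by zero-padding, the hypothesis $\sid{u}{v}\in\Id(\Mat_{n-1})$ equates the inner evaluations, hence the outer products.

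Case 2 (inner block of $\trrk=n$): The nontriviality of the given $\UT_n$-identity forces both letters $a,b$ to appear in each of its words, since a scalar-matrix evaluation shows that nontrivial identities must balance letter counts. Iterating Theorem~\ref{pr:perAB} on the word-factorization of the inner block into $\tilde A$'s and $\tilde B$'s then shows that $\tilde A$, $\tilde B$, and the relevant subfactors ($Q,R$ in~(i); $P\hat QP, P\hat RP$ in~(ii)) are all nonsingular. Lemma~\ref{lem:permAn} gives $\per(\tilde A)=\mtr(\tilde A)$, $\per(\tilde B)=\mtr(\tilde B)$, so Theorem~\ref{thm:FullRank} applies with the stated $\UT_n$-identity, yielding $Q=R$ in~(i), or $P\hat QP=P\hat RP$ in~(ii).

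From this one finishes each case directly. For~(i), $Q=R$ makes $M = Q^{2t}$ and $N = Q^{2t+1}$ both powers of $Q$, so $(ua)\eval{M}{N}$ and $(va)\eval{M}{N}$ collapse to a common power of $Q$ using the letter balancing of $\sid{u}{v}\in\Id(\Mat_{n-1})$. For~(ii), $P\hat QP=P\hat RP$ allows replacing the rightmost $P\hat QP$ in $\sigma\hat QP = P\hat QP\hat RP\hat QP$ by $P\hat RP$, giving $\sigma\hat QP = \sigma\hat RP$ and hence $M = \sigma^{t-1}(\sigma\hat QP) = \sigma^{t-1}(\sigma\hat RP) = N$, reducing the target equality to $M^{\ell(u)+1}=M^{\ell(v)+1}$ with $\ell(u)=\ell(v)$. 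The most delicate step is Case~2: establishing the full-rank and per-trace hypotheses of Theorem~\ref{thm:FullRank} requires propagating nonsingularity from the inner block down to the sub-factors under comparison via Theorem~\ref{pr:perAB}, and ruling out the degenerate situation where a letter is missing from the $\UT_n$-identity's words (handled by the letter-balancing argument).
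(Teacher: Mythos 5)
Your proof is correct and follows essentially the same route as the paper's: a case split on the rank of the inner block (you condition on $\trrk$ of the inner block, the paper on $\frk$ of the substituted matrix, but these are interchangeable via Theorem~\ref{thm:frk}), Shitov's Lemma~\ref{l:Shitov} in the low-rank case, and Theorem~\ref{thm:FullRank} together with commutativity (or, in part~(ii), your slightly cleaner derivation of $M=N$) in the full-rank case. You correctly flag a step the paper leaves implicit: nontriviality of the $\UT_n$-identity forces both letters to occur in its words, which is what lets Proposition~\ref{pr:perAB} propagate nonsingularity down to $A^{\lcmn}$ and $B^{\lcmn}$ so that Lemma~\ref{lem:permAn} supplies the $\per=\mtr$ hypotheses of Theorem~\ref{thm:FullRank}.
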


\begin{proof}
$(i)$:
Let~$A,B\in \Mat_{n}$, 
and let
$$ X =  \big((qr)^{t}\big)\eval{A^\lcmn}{B^\lcmn}, \quad Y = \big((qr)^{t}r\big)\eval{A^\lcmn}{B^\lcmn}=XR, \quad \text{ with } R=r\eval{A^\lcmn}{B^\lcmn},$$
be matrices in $\Mat_{n} $.

\begin{itemize}[leftmargin=.2in]
  \item[--]

 If~$\frk\left(X\right)<n$, then $X=PQ$ for some matrices~$P\in\Mat_{n,n-1}$ and~$Q\in\Mat_{n-1,n}$. (Add columns and rows of~$\zero$, if $\frk\left(X\right)<n-1$.)
Hence $QP,QRP\in\Mat_{n-1}$,  and  using Lemma~\ref{l:Shitov} we obtain
\begin{equation*}
(ua) \eval{X}{Y}=P\big( u\eval{QP}{QRP}\big)Q=P\big(v\eval{QP}{QRP}\big)Q= (va) \eval{X}{Y},
\end{equation*}
since $\sid{u}{v}\in\Id(\Mat_{n-1})$ by assumption. Therefore,
\begin{equation}\label{eq:uava1}
(ua) \eval{X}{Y}= 
(va) \eval{X}{Y}.
\end{equation}

\item[--]
If $\frk\left(X\right)= n$, then  $\trrk\left((qr)\eval{A^\lcmn}{B^\lcmn}\right)=n$ by Theorem~\ref{thm:frk}, implying that
$$\trrk\left(q\eval{A^\lcmn}{B^\lcmn}\right)=\trrk\left(r\eval{A^\lcmn}{B^\lcmn}\right)=n$$
by Proposition~\ref{pr:perAB}.  Then $q\eval{A^\lcmn}{B^\lcmn}=r\eval{A^\lcmn}{B^\lcmn}$ by Theorem~\ref{thm:FullRank}, since $\sid{q}{r}\in\Id(\UT_{n})$.
Thus $X$ and~$Y$ are both powers of~$q\eval{A^\lcmn}{B^\lcmn}$, and hence commute.
This  implies~\eqref{eq:uava1},
since $\lno{a}{u} =\lno{a}{v} $ and $\lno{b}{u} =\lno{b}{v} $.
\end{itemize}
Therefore, \eqref{eq:uava1}~holds for any~$A,B\in\Mat_{n}$, which means that~\eqref{eq:uv1} holds true.

\pSkip
$(ii)$: The proof of~\eqref{eq:uv2} follows along the same lines of $(i)$.
\end{proof}

Replacing Theorem~\ref{thm:frk} in the proof by Proposition~\ref{pr:TropToFactor}, a similar result
is obtained, but with longer identities, in which $t$ is exchanged by~$\lcmn t$ and~$t\ge \Wi$ by~$t\ge 3n-2$.
Consequentially, by this change, only a subset of identities is produced.

\begin{theorem}
\label{thm:IdExistence} The monoid $\Mn$ satisfies a nontrivial semigroup identity for every $n \in \N$.
The length of this identity grows with~$n$ as~$e^{Cn^2+\operatorname{o}(n^2)}$ for some~$C\le 1/2+\ln(2)$.
\end{theorem}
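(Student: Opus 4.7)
The plan is to prove Theorem~\ref{thm:IdExistence} by induction on $n$, with Theorem~\ref{thm:Induction}(i) serving as the inductive engine. The base case $n=1$ is trivial: the semiring $\Trop$ is commutative, so $\sid{ab}{ba}\in\Id(\Mat_1)$ is a nontrivial identity of length~$2$. For the inductive step, assume that $\Mat_{n-1}$ satisfies a nontrivial identity $\sid{u}{v}$ of length~$L_{n-1}$. Theorem~\ref{thm:Induction}(i) requires in addition a nontrivial identity $\sid{q}{r}\in\Id(\UT_n)$, which is supplied by~\cite[Theorem 4.10]{trID} (alternatively~\cite{Okninski}); denote its length by~$T_n$. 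Applying Theorem~\ref{thm:Induction}(i) with $t=\Wi$ then produces the sought identity in $\Id(\Mat_n)$. Nontriviality is inherited from $\sid{u}{v}$ once one observes that every identity of $\Mat_{n-1}$ is balanced in letter-counts (by restriction to scalar matrices $sI$) and that the substitution $a\mapsto X$, $b\mapsto Y$ with $X\neq Y$ is injective on the (balanced) words of equal length arising here.

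For the length estimate, the substituted words
$$X := \big((qr)^{t}\big)\substit{a^{\lcmn}}{b^{\lcmn}}, \qquad Y := \big((qr)^{t}r\big)\substit{a^{\lcmn}}{b^{\lcmn}}$$
each have length at most $(2t+1)\,T_n\,\lcmn$, and they replace each of the $L_{n-1}+1$ letters of $ua$ (and of $va$). Hence, writing $L_n$ for the length of the resulting identity in $\Id(\Mat_n)$,
$$L_n \;\le\; (L_{n-1}+1)\,(2t+1)\,T_n\,\lcmn, \qquad t=\Wi.$$
By the prime number theorem, $\ln\lcmn = n + \oo(n)$, while $\ln(2t+1)=\oo(n)$. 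Using the (known) bound $\ln T_n \le (2\ln 2)\,n + \oo(n)$ inherited from~\cite[Theorem 4.10]{trID}, we obtain
$$\ln L_n \;\le\; \ln L_{n-1} + (1+2\ln 2)\,n + \oo(n).$$
Telescoping from $n=2$ upwards yields $\ln L_n \le \tfrac{1+2\ln 2}{2}\,n^2 + \oo(n^2) = (1/2+\ln 2)\,n^2 + \oo(n^2)$, which is the claimed asymptotic with $C = 1/2+\ln 2$.

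The main technical obstacle is tight control of the constant~$C$. The exponent $1/2$ arises from summing $\ln\lcmn \sim n$ over $k\le n$ (ultimately resting on the prime number theorem), while the $\ln 2$ contribution arises from the growth rate of $T_n$ for the triangular matrices. A secondary subtlety is preserving nontriviality through the substitution step, but this is handled by the balancedness of identities of $\Mat_{n-1}$ together with the fact that $X$ and $Y$, viewed as codewords, uniquely parse the resulting substituted word among balanced words of the relevant length. Everything else is algebraic bookkeeping already assembled in Section~\ref{sec:3} and in Theorem~\ref{thm:Induction}.
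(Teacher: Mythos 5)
Your overall strategy coincides with the paper's: induction on $n$, using Theorem~\ref{thm:Induction}(i) together with the triangular identity $\sid{q}{r}\in\Id(\UT_n)$ from~\cite[Theorem 4.10]{trID}, and then a telescoping estimate involving $\ln\lcmn\sim n$ (via the prime number theorem) and $\ln T_n$. That part is fine. Two points deserve attention.

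First, the bound $\ln T_n\le(2\ln 2)n+\oo(n)$ you attribute to~\cite[Theorem 4.10]{trID} is a misquote: the paper states $\wlen(q)=\wlen(r)=2^{n+\oo(n)}$, i.e.\ $\ln T_n=n\ln 2+\oo(n)$, not $2n\ln 2+\oo(n)$. Your extra factor of~$2$ is what makes the telescoping come out to exactly $1/2+\ln 2$; with the correct bound the same computation gives the tighter constant $C\le\tfrac{1+\ln 2}{2}$, which is of course still $\le 1/2+\ln 2$, so your final claim is not wrong, only looser than what the data actually gives. (The paper's stated $C\le 1/2+\ln 2$ appears to be a deliberately weak, or slightly miscomputed, upper bound.)

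Second, the nontriviality argument is genuinely incomplete. Balancedness of $\sid{u}{v}$ plus $X\ne Y$ does \emph{not} make the substitution $a\mapsto X$, $b\mapsto Y$ injective on balanced words: take $X=ab$, $Y=abab$; then $aab$ and $aba$ are balanced, distinct, yet both map to $(ab)^4$. What one actually needs is that $\{X,Y\}$ is a uniquely decipherable code, equivalently $XY\ne YX$, equivalently $X,Y$ are not powers of a common word. In the construction this can be checked: writing $Q=q\substit{a^{\lcmn}}{b^{\lcmn}}$, $R=r\substit{a^{\lcmn}}{b^{\lcmn}}$ so that $X=(QR)^t$, $Y=XR$, one reduces $XY=YX$ to $QR=RQ$, i.e.\ to $q$ and $r$ being powers of a common word; but that would force $\UT_n$ to satisfy an identity of the form $w_0^i=w_0^j$ with $i\ne j$ and hence have bounded exponent, which fails for $n\ge 1$ (take $A$ triangular with positive spectral radius). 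The paper itself glosses over nontriviality, so this is a gap you were right to flag, but the balancedness criterion you substitute in its place does not close it.
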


\begin{proof}
  The case of $n=1$ is trivial, while \cite[Theorem 3.9]{IzMr} proves the case of $n=2$.
  For tropical triangular matrices there exists an identity $\sid{q}{r}\in\Id(\UT_{n})$ by \cite[Theorem 4.10]{trID}, or \cite[Theorem 0.1]{Okninski}.
  The proof then easily follows from Theorem \ref{thm:Induction} by induction.
  To bound the length,  note that ~$\sid{q}{r}\in\Id(\UT_{n})$ given by~\cite[Theorem 4.10]{trID}
  has length~$\wlen(q)=\wlen(r)=2^{n+\operatorname{o}(n)}$, while $\lcmn=e^{n+\operatorname{o}(n)}$ --
  a  fact that follows from the Prime Number Theorem.
\end{proof}

\begin{remark} Decreasing the length of ~$\sid{q}{r}\in\Id(\UT_{n})$ would  lead to a better bound on~$C$.
 Yet, with this method whose formulas includes $\lcmn$, such bound cannot be lower than  $1/2$.
\end{remark}

We immediately conclude the following:

\begin{corollary}\label{cor:rep.id}
Any semigroup which is faithfully represented  by $\Mn$  satisfies a nontrivial identity.\end{corollary}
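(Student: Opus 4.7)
The plan is to deduce this corollary directly from Theorem~\ref{thm:IdExistence} together with the general principle that subsemigroups (up to isomorphism) inherit identities. Suppose $\tS = (\tS,\cdot)$ is faithfully represented by $\Mn$, i.e., there is an injective semigroup homomorphism $\phi : \tS \to \Mn$. By Theorem~\ref{thm:IdExistence}, $\Mn$ satisfies some nontrivial identity $\sid{u}{v}$, with $u,v \in \varX^+$ distinct words. I will show that $\tS$ satisfies this same identity.

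Concretely, fix any elements $s', s'' \in \tS$, and consider any semigroup homomorphism $\psi : \varX^+ \to \tS$ sending $a \mapsto s'$, $b \mapsto s''$. The composition $\phi \circ \psi : \varX^+ \to \Mn$ is again a semigroup homomorphism, so the identity $\sid{u}{v} \in \Id(\Mn)$ gives
\[
\phi(\psi(u)) = (\phi\circ\psi)(u) = (\phi\circ\psi)(v) = \phi(\psi(v)).
\]
Since $\phi$ is injective (faithfulness), this forces $\psi(u) = \psi(v)$, i.e., $u\eval{s'}{s''} = v\eval{s'}{s''}$ in $\tS$. As $s', s''$ were arbitrary, we obtain $\sid{u}{v} \in \Id(\tS)$ by~\eqref{eq:s.id}.

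There is essentially no obstacle: the argument is the standard Birkhoff-style observation that semigroup identities are preserved under taking subsemigroups, applied through the injection $\phi$. The only point worth remarking on is that we must use $u, v \in \varX^+$ (not $\varX^*$), which matches the convention already fixed in the paper after Theorem~\ref{thm:2id}, so no subtlety about empty words arises. Thus the corollary follows in a few lines from Theorem~\ref{thm:IdExistence}.
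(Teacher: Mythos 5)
Your proof is correct and is exactly the standard argument the paper has in mind (the paper simply states the corollary as an immediate consequence of Theorem~\ref{thm:IdExistence} without spelling it out). Composing the faithful representation with a substitution homomorphism and using injectivity to pull the identity back to $\tS$ is the intended reasoning.
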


\begin{example} Set $p=a^2b^2a[ab,ba]$. Then $\sid{pabp}{pbap}\in \Id(\UT_3)$  by \cite{trID.Er},
while $\Mat_2$ satisfies  an identity~$\sid{u}{v}$ of length 17 by~\cite{l17}.
Thus, by Theorem \ref{thm:Induction}.(ii),  $\Mat_3$ satisfies an identity of length~19,656, while Theorem \ref{thm:Induction}.(i)  gives a length~24,816.
In~\cite{Shitov} Shitov pointed out  that a matrix  $A\in\Mat_3$  has either determinantal rank~$3$ or factor rank at most~$2$. Consequently, for $\Mat_3$,  Remark~\ref{rem:determinantalRank} allows to omit exponent~$t$ in Theorem~\ref{thm:Induction}, which reduces the identity length to ~4,968 and~5,808, respectively.
\cite{Shitov} provides identities of  length~1,795,308.
\end{example}



\begin{thebibliography}{1}




\bibitem{ABG}
M.~Akian, R.~Bapat,  S.~Gaubert.
\newblock Max-plus algebra,
\newblock In:  L. Hogben, R. Brualdi, A. Greenbaum, R. Mathias  (eds.)
{\em Handbook of Linear Algebra}. Chapman and Hall, London, 2006.

\bibitem{AGGu}
M.~Akian, S.~Gaubert, A.~Guterman.
\newblock Linear independence over tropical semirings and beyond.
\newblock In {\em Tropical and idempotent mathematics}, vol. 495 of {\em
  Contemp. Math.}, pp 1--38. Amer. Math. Soc., Providence, RI, 2009.

\bibitem{Ales}
F. d'Alessandro,  E. Pasku.
\newblock A combinatorial property for semigroups of matrices,
\newblock \emph{Semigroup Forum}, 67(1):22--30, 2003.


\bibitem{Butkovic}
P. Butkovi\v{c}.  \newblock {\em Max-linear systems: theory and
algorithms}. Springer Monographs in Mathematics, Springer-Verlag
London Ltd., London, 2010.

\bibitem{Sapir}
Y.  Chen, X. Hu,  Y. Luo,  O. Sapir.
The finite basis problem for the monoid of $2\times 2$
upper triangular tropical matrices,
{\em Bulletin  Australian Math. Society},
 94(1):54--64, 2016.

\bibitem{DJM}
L. Daviaud, M. Johnson, M. Kambites.
\newblock Identities in upper triangular tropical matrix semigroups and the bicyclic monoid,
\emph{J. Alg.}, 501:503--525, 2018.

\bibitem{l17}
L. Daviaud, M. Johnson.
\newblock The shortest identities for max-plus automata with two states, In: K. G. Larsen, H. L. Bodlaender, J.-F. Raskin (eds.)
\newblock \emph{MFCS 2017},
\newblock Art. 48:1--13, 2017.


\bibitem{wSEP}
E. Demaine, S. Eisenstat, J. Shallit,  D. Wilson.
\newblock Remarks on separating words,
\emph{DCFS 2011}, LNCS 6008, 147--157, 2011.



\bibitem{DSSt}
M.~Develin, F.~Santos,  B.~Sturmfels.
\newblock On the rank of a tropical matrix.
\newblock In {\em Combinatorial and computational geometry}, vol.~52 of {\em
  Math. Sci. Res. Inst. Publ.}, pp 213--242, Cambridge Univ. Press,
  Cambridge, 2005.

\bibitem{HBookAutom}
M.~Droste, W.~Kuich,  W.~Vogler (eds.),
\newblock  {\em Handbook of weighted automata},
\newblock  Springer-Verlag, {2009}.





\bibitem{ultRank}
 P. Guillon, Z. Izhakian, J. Mairesse, G. Merlet.
\newblock The ultimate rank of tropical matrices. {\em J. Alg.},
437:222--248, 2015.


\bibitem{Grom} M.~Gromov.
\newblock {Groups of polynomial growth and expanding maps}.
\newblock {\em Publ. Math IHES}, 53:53--73, 1985.


\bibitem{Iz}
Z.~Izhakian.
\newblock Tropical arithmetic and matrix algebra.
\newblock {\em Commun. Alg.},  37(4):1445--1468,
2009.

\bibitem{trID}
Z.~Izhakian.
Semigroup identities in the monoid of   triangular tropical
matrices, {\em Semigroup Forum},  88(1):145--161, 2014.

\bibitem{trID.Er}
Z. Izhakian.  Erratum to: Semigroup identities in the monoid of triangular tropical matrices, {\em Semigroup Forum},  92(3):733, 2016.

\bibitem{plc}
Z.~Izhakian.
Tropical plactic algebra, the cloaktic monoid, and semigroup representations, arXiv:1701.05156, 2017.

\bibitem{mxID}
Z.~Izhakian.
Semigroup identities of  tropical matrix semigroups of maximal rank, {\em Semigroup Forum},
 92(3):712--732, 2016.


\bibitem{IJK} Z.~Izhakain, M.~Johnson, M. Kambites.
Tropical matrix groups,  \emph{Semigroup Forum}, 96(1):178--196, 2018.




\bibitem{IzMr}
Z.~Izhakian, S. Margolis.
\newblock Semigroup identities in the monoid of 2-by-2 tropical
matrices.
\newblock {\em Semigroup Furom},  80(2):191--218, 2010.

\bibitem{IR1} Z.~Izhakian, L.~Rowen.
\newblock  The tropical rank of a tropical matrix. {\em Commun. Alg.},
 37(11):3912--3927, 2009.

\bibitem{IR2}
Z.~Izhakian, L.~Rowen.
\newblock {Supertropical matrix algebra}.
\newblock  {\em Israel J. Math.}, {182}(1):383--424, 2011.

%
\bibitem{IR3}
Z.~Izhakian, L.~Rowen.
\newblock {Supertropical matrix algebra II: Solving tropical
equations}.
\newblock {\em  Israel J. Math.},  {186}(1):69--97, 2011.

\bibitem{IR4}
Z.~Izhakian, L.~Rowen.
\newblock {Supertropical matrix algebra III: Powers of matrices and generalized
eigenspaces.}  {\em J. Alg.}, {341}(1):125--149, 2011.






%
%


\bibitem{MPatW} B. Heidergott, G. J. Olsder, J. van der Woude.
\emph{Max Plus at Work:
Modeling and analysis of synchronized systems: A course on max-plus algebra and its applications},
Princeton University Press, 2006



\bibitem{merl10}
G.~Merlet.
\newblock Semigroup of matrices acting on the max-plus projective space.
\newblock {\em Linear Alg. Appl.}, 432(8):1923--1935, 2010.

\bibitem{wCSR}
G.~Merlet, T.~Nowak,  S.~Sergeev.
\newblock {Weak CSR expansions and transience bounds in max-plus algebra.}
\newblock {\em Linear Alg.  Appl.},~{461}:163--199, 2014.


\bibitem{Okninski}
J.Okninski.
\newblock {Identities of the semigroup of upper triangular tropical matrices}, {\em Commun. Alg.},  43(10):4422--4426, 2015.


\bibitem{pin98}
J.-E. Pin.
\newblock Tropical semirings.
\newblock {\em Cambridge Univ. Press, Cambridge}, 11:50--69, 1998.

\bibitem{pinSV}
J.-E. Pin.
\newblock \emph{Mathematical foundations of automata theory}, ver. Nov. 30, 2016.








\bibitem{Robson89} J.-M. Robson.
\newblock Separating strings with small automata,
\newblock {\em Inform. Process. Lett.}, 30:209--214, 1989.


\bibitem{Shitov} Y.~Shitov.
\newblock{A semigroup identity for tropical 3x3 matrices},
  arXiv:1406.2601, 2014.

\bibitem{Shn}
L.~Shneerson.
\newblock{Identities in finitely generated semigroups of polynomial
growth}.
\newblock {\em J. Alg.,}  154(1):67--85, 1993.

\bibitem{Simon}
I.~Simon.
\newblock {Recognizable sets with multiplicities in the tropical
semiring}, in MFCS 88, M. Chytil, L. Janiga, V. Koubek (eds.),
{\em Lecture Notes in Computer Science}, Springer, 324:107--120,
1988.
%

\bibitem{SV}
L. N. Shevrin, M. V. Volkov.
\newblock  Identities of semigroups, {\em Izv. VUZ. Matematika} (Russian), 11:3--47, 1985; {\em Soviet Math. Iz. Vuz. 29} (English translation), 11:1--64, 1985.

\end{thebibliography}
\end{document}